\newcommand{\R}{\mathbf{R}}
\DeclareMathOperator*{\chol}{chol}
\DeclareMathOperator*{\argmin}{arg\,min}
\DeclareMathOperator*{\argmax}{arg\,max}
\DeclareMathOperator*{\essinf}{ess\,inf}
\DeclareMathOperator*{\dist}{dist}
\newtheorem*{lemma}{Lemma}
\begin{document}

\title{Thermal tomography with unknown boundary}
\author{Nuutti Hyvönen\footnote{Aalto University, Department of Mathematics and Systems Analysis, P.O.\ Box 11100, FI-00076 Aalto, Finland (nuutti.hyvonen@aalto.fi).} \and Lauri Mustonen\footnote{Aalto University, Department of Mathematics and Systems Analysis, P.O.\ Box 11100, FI-00076 Aalto, Finland (lauri.mustonen@aalto.fi).}}

\maketitle

\begin{abstract}
Thermal tomography is an imaging technique for deducing information about the internal structure of a physical body from temperature measurements on its boundary. This work considers time-dependent thermal tomography modeled by a parabolic initial/boundary value problem without accurate information on the  exterior shape of the examined object. The adaptive sparse pseudospectral approximation method is used to form a polynomial surrogate for the dependence of the temperature measurements on the thermal conductivity, the heat capacity, the boundary heat transfer coefficient and the body shape. These quantities can then be efficiently reconstructed via nonlinear, regularized least squares minimization employing the surrogate and its derivatives. The functionality of the resulting reconstruction algorithm is demonstrated by numerical experiments based on simulated data in two spatial dimensions.

\bigskip

\noindent{\it Keywords\/}: thermal tomography, inverse boundary value problems, sparse pseudospectral approximation, inaccurate measurement model

\noindent{\it AMS subject classifications\/}: 35R30, 41A10, 65M32
\end{abstract}

\section{Introduction}
\label{sec:intro}

The aim of thermal tomography is to gather information about the interior properties of a physical object based on temperature measurements on its boundary.
The potential applications include nondestructive testing of materials; see,~e.g.,~\cite{bakirov05,toivanen12} for further introduction to the topic as well as for a list of previous works.
The model used in this article is time-dependent: the boundary temperatures are measured several times while heating the object.
Thermal conductivity and heat capacity are both considered as unknown quantities as in \cite{kolehmainen08}.
In addition, the boundary heat transfer coefficient between the heater elements is reconstructed.
Such a setting was considered with simulated data in \cite{toivanen12} and subsequently with experimental data in \cite{toivanen14}.
As a novelty of this work, the exterior shape of the imaged object is also reconstructed.

In this article, we approximate the dependence of the temperature measurements on the four unknown quantities by multivariate polynomials.
The unknown functions (i.e.,~the thermal conductivity, heat capacity, boundary heat transfer coefficient and boundary shape) are first expressed in terms of a finite number of parameters.
The polynomial forward surrogate, which takes these parameters as input, is then constructed by the (adaptive) {\em sparse pseudospectral approximation method} (SPAM) \cite{constantine12,conrad13,winokur13,winokur15} that requires the solutions to a large number of parabolic initial/boundary value problems describing the heat flow in the examined object.
When a measurement vector is given, the inverse problem of determining the unknowns is treated as nonlinear, regularized least squares minimization involving the forward surrogate.
The advantage of this approach is that only polynomial evaluation and differentiation are needed when reconstructing the quantities of interest from the measurement data, which leads to a fast reconstruction process.
More precisely, under the assumption that enough generic information about the measurement setup is available, the polynomial forward surrogate can be formed `offline', that is, before the actual measurements are in hand.
Thus, the efficiency of the `online' phase, or the least squares reconstruction algorithm, does not depend on how the parabolic forward problems were solved in the offline phase.
For a related polynomial approximation approach to thermal tomography assuming the shape of the imaged body is known and without modeling the effects of thermal capacitance or the geometry of the heaters and sensors, see~\cite{mustonen15}, where a (non-adaptive) Galerkin technique is used for forming the needed surrogates. To the best of our knowledge, apart from~\cite{mustonen15}, there are no previous works considering polynomial surrogates as a computational tool in the framework of inverse parabolic boundary value problems.

In practical applications, the exterior shape of the imaged body is usually not the object of main interest, and the same definitely applies to the boundary heat transfer coefficient.
Typically, the interesting quantities are the thermal conductivity and the heat capacity, or perhaps only one of those,~e.g.,~if there exists a known functional dependence between the two of them.
However, we demonstrate that ignoring the uncertainties in the object shape and the boundary heat transfer coefficient deteriorates the reconstructions of the thermal conductivity and the heat capacity and therefore we suggest that all four unknowns should be reconstructed simultaneously if possible.
Although the main reason for only considering two-dimensional examples is their computational simplicity, there exists also some practical relevance since a two-dimensional model can be utilized if the object is cylindrically symmetric with suitable boundary conditions at the top and bottom boundaries.

A related shape estimation problem has previously been studied in the context of {\em electrical impedance tomography} (EIT) in~\cite{darde13a,darde13b,hyvonen16,nissinen11}. The fundamental difference between EIT and thermal tomography is that the former is modeled by an elliptic partial differential equation with one unknown physical field (conductivity), whereas thermal tomography corresponds to an (inverse) parabolic boundary value problem with two coefficients defining the properties of the imaged body (thermal conductivity and heat capacity). The algorithm of \cite{nissinen11} is based on the so called approximation error approach \cite{Kaipio04a}: the error caused by mismodeling the measurement setup is treated as an additional noise process whose statistics are approximated via simulations based on prior probability models for the unknowns. The papers \cite{darde13a,darde13b} introduce Fr\'echet derivatives of the forward map of EIT with respect to the measurement geometry, which allows one to estimate the body shape as a part of a Bayesian output least squares algorithm, although the evaluation of the needed shape derivatives suffers from slight instability that originates from certain distributional boundary conditions in the elliptic boundary value problems defining the derivatives. Our approach is closely related to \cite{hyvonen16} where a polynomial surrogate is (non-adaptively) built for the forward operator of EIT. However, the efficiency gain induced by transferring computational burden to an offline phase is higher in thermal tomography than in EIT due to the possibility to completely avoid time integration in the online phase when the reconstruction is formed by minimizing a least squares functional.  

We denote integer vectors, or \emph{multi-indices}, by bold symbols such as $\boldsymbol{k} \coloneqq (k_1, \ldots, k_N)$.
In particular, the $n$th Euclidean basis vector is denoted by $\boldsymbol{e}_n$, having $1$ as the $n$th element and $0$ in the remaining $N-1$ positions, where $N$ is assumed to be clear from the context.
The calligraphic font, e.g.~$\mathcal{K}$, is used for sets of multi-indices.
Linear operators are denoted by scripted letters such as $\mathscr{Q}$ or $\mathscr{P}$.
Symbols $\mathbf{R} = (-\infty, \infty)$ and $\mathbf{N} = \{1,2,3,\ldots\}$ as well as $\mathbf{N}_0 = \mathbf{N} \cup \{0\}$ have their usual meanings.
Furthermore, $\mathbf{P}_k$ denotes the space of real-valued polynomials in one real variable, having a degree $k \in \mathbf{N}_0$ or less.

This text is organized as follows.
In Section~\ref{sec:tomo}, the governing equations for heat flow are introduced and the dependence of the measurements on the finite-dimensional parameter vector is formulated.
The pseudospectral approximation method and its adaptive version are described in Section~\ref{sec:pseudospectral}.
Section~\ref{sec:num} presents the numerical experiments that address both the accuracy of the forward surrogate and the feasibility of thermal tomography with an unknown exterior boundary.
Finally, conclusions are drawn in Section~\ref{sec:conclusions}.

\section{Thermal tomography}
\label{sec:tomo}

We start this section by mathematically formulating our forward model for thermal tomography following~\cite{toivanen12,toivanen14}. Subsequently, a parametric extension is introduced for a certain two-dimensional measurement configuration.

\subsection{Measurement model}
\label{ssec:forward}

Let $\varOmega \subset \mathbf{R}^d$, where $d \in \{2,3\}$, denote a bounded domain with a $C^1$-boundary $\partial \varOmega$ and the exterior unit normal $\hat{\boldsymbol{n}}$.
Furthermore, let $I = (0,T)$ denote a time interval with $T>0$.
The initial/boundary value problem considered as a model for the temperature $u$ in thermal tomography is
  \begin{equation}
  \label{eq:strong}
    \begin{dcases}
    b\, \partial_t u - \nabla \cdot (a \nabla u) = 0 & \text{in } \varOmega \times I, \\
    a \nabla u \cdot \hat{\boldsymbol{n}} = c \,(f-u) & \text{on } \partial \varOmega \times I, \\
    u = 0 & \text{in } \varOmega \times \{0\},
    \end{dcases}
  \end{equation}
where $a, b \in L_+^\infty(\varOmega)$ are the thermal conductivity and the heat capacity, respectively, and $c \in L_+^\infty(\partial \varOmega)$ is the surface conductance or heat transfer coefficient. The time-dependent boundary source $f \in L^2(I; L^2(\partial \varOmega))$ represents the external or outside temperature. All aforementioned function spaces are assumed to have $\mathbf{R}$ as the corresponding multiplier field and
$$
L^\infty_+(D) \, := \, \{ v \in L^\infty(D) \, : \, {\rm ess} \inf v > 0 \}
$$ 
for $D = \varOmega$ or $\partial \varOmega$. The weak formulation of \eqref{eq:strong} is to find $u \colon \varOmega \times I \to \mathbf{R}$ such that
  \begin{equation}
  \label{eq:weak}
  \partial_t (b \, u, v)_{\varOmega} + (a \nabla u, \nabla v)_{\varOmega} + ( c\, u, v )_{\partial \varOmega} = ( c \, f, v )_{\partial \varOmega} \qquad {\rm for} \ {\rm all} \ v \in H^1(\varOmega),
  \end{equation}
where $(\cdot, \cdot)_D$ denotes the (real) $L^2$ inner product on $D$ and the time derivative as well as the initial condition from \eqref{eq:strong} are to be understood in the appropriate distributional sense (cf.,~e.g.,~\cite{dautray92}). Under the above assumptions, the variational problem \eqref{eq:weak} has a unique solution $u \in V$, where
  \begin{equation}
   V \coloneqq L^2(I; H^1(\varOmega)) \cap C(\overline{I}; L^2(\varOmega));
  \end{equation}
see,~e.g.,~\cite{dautray92}.

We assume that the boundary is equipped with $J \in \mathbf{N}$ non-overlapping heater elements $\{H_j\}_{j=1}^J$ that are identified with the open, nonempty and connected parts of $\partial \varOmega$ that they cover. In particular, it is natural to model the boundary heat source as
$$
f \, = \, \sum_{j=1}^J g_j \chi_j ,
$$
where $g_j \in L^2(I)$ and $\chi_j \colon \partial \varOmega \to \{0,1\}$ is the characteristic function of $H_j$. Moreover, since our model for thermal tomography involves heating only one heater at a time and letting the target object to cool down before activating the next heater, we actually consider solving \eqref{eq:strong} for a relatively short period of time but for $J$ different functions
\begin{equation}
\label{eq:bsource}
f = f_j  :=  g \, \chi_j, \qquad j=1, \dots, J,
\end{equation}
where $g \in L^2(I)$ is the chosen common time modulation for the heating patterns. Usually, the heat transfer coefficient $c$ is large on those parts of the boundary that are covered by heaters and much smaller (but still positive) on the rest of the boundary (cf.~\cite{toivanen14}).
Note that even though $f$ vanishes outside the heaters, the heat still leaks to the surrounding air due to the non-vanishing Robin coefficient $c$.

It is assumed that the boundary temperature of $\varOmega$ is measured at $R$ point-like sensors $\{s_r\}_{r=1}^R \subset \partial \varOmega$ that are well-separated from the heaters. We denote by $U \in \mathbf{R}^M$ a measurement vector that consists of the temperature values at the sensor locations for each measurement time $t_i \subset \overline{I}$, $i=1,\dots, M_T$. (To make such measurements well defined without further regularity assumptions for~\eqref{eq:strong}, a single measurement should be interpreted as the mean value of $u|_{\partial \varOmega \times I} \in L^2(I; H^{1/2}(\partial \varOmega)) \subset L^2(I; L^2(\partial \varOmega)) \simeq L^2(\partial \varOmega \times I)$ over a small neighborhood of the corresponding point $(s_r, t_i)$ on $\partial \varOmega \times \overline{I}$.) Assuming that for each active heater,~i.e.,~for every $f_j$, all $R$ sensor values are recorded at all measurement times, the total number of measurements becomes $M = JRM_T$.

After a {\em finite element} (FE) discretization, \eqref{eq:weak} results in a system of ordinary differential equations
  \begin{equation}
  \label{eq:ode}
  \partial_t B \, \widehat{u} + (A + C)\, \widehat{u} = \widehat{f},
  \end{equation}
where $A$, $B$ and $C$ are symmetric matrices corresponding to the three integrals on the left-hand side of \eqref{eq:weak}. Moreover, $\widehat{u}$ and $\widehat{f}$ are vectors carrying the coefficients of $u$ and $cf$ in the chosen FE basis. In practice, $\widehat{f}$ and $\widehat{u}$ can also be matrices whose $J$ columns correspond to heating different heaters.
The semi-discrete system \eqref{eq:ode} is solved in time by using the second-order Crank--Nicolson time integration method \cite{Larson13}.
The measurement vector $U$ is computed by evaluating the numerical solution at the specified measurement times and locations.

\subsection{Parametric extension}
\label{ssec:param}

In order to build a surrogate model that describes the dependence of the measurements on the fields $a$ and $b$ as well as on the coefficient $c$ and the boundary shape $\partial \varOmega$, we parametrize these quantities with a finite number of parameters.
More precisely, after denoting by $N = N_a + N_b + N_c + N_\varOmega$ the total number of parameters, we consider parameter vectors $\vartheta = (\vartheta^{(a)}, \vartheta^{(b)}, \vartheta^{(c)}, \vartheta^{(\varOmega)}) \in \varXi^N$, where $\varXi = [-1/2, 1/2]$ and $\varXi^N$ is the parameter hypercube with unit volume. The idea is that each fixed parameter vector defines one well posed initial/boundary value problem of the form \eqref{eq:strong}. That is, for each parameter vector there exists a unique solution $u$ and a unique measurement vector $U$ so that these quantities can be interpreted as mappings $u \colon \varXi^N \to V$ and $U \colon \varXi^N \to \mathbf{R}^M$, respectively. In the following, we only consider a specific two-dimensional parametrization, but emphasize that the extension to more complicated geometries or to three dimensions is conceptually straightforward. 

Let us begin by considering a suitable representation for the shape of a domain $\varOmega \subset \R^2$ as a perturbation of a disk. To this end, the boundary curve is parametrized in polar coordinates by two variables, the polar angle $\phi$ and the shape parameter $\vartheta^{(\varOmega)}\in \varXi^{N_\varOmega}$, that is,
\begin{equation}
\label{eq:boundparam}
\partial \varOmega (\vartheta^{(\varOmega)}) \, = \, \big\{ \big( r(\phi;  \vartheta^{(\varOmega)}), \phi \big) \ : \ \phi \in [0, 2\pi) \big\}.
\end{equation}
The radial coordinate is chosen to be of the form
\begin{equation}
\label{radial_coor}
r(\phi; \vartheta^{(\varOmega)}) = \rho_0 + \sum_{i=1}^{N_\varOmega} (\rho_+ - \rho_-) \, \vartheta^{(\varOmega)}_i  \psi_i(\phi),
\end{equation}
where $\rho_0 = (\rho_+ - \rho_-)/2$ and $\rho_+ > \rho_-$ are the maximal and minimal radii of the parametrized domain, respectively. The local perturbations $\psi_i \in C^1([0,2\pi))$ are $2 \pi$-periodic, nonnegative and uniform quadratic B-splines that form a partition of unity on the unit circle. We also define a homeomorphism $\Phi(\, \cdot \, ; \vartheta^{(\varOmega)}) \colon \overline{\varOmega(\vartheta^{(\varOmega)})} \to \overline{\varOmega(0)}$ from a given perturbed domain to the unperturbed reference disk of radius $\rho_0$ via
\begin{equation}
\label{homeodef}
\Phi \big((r',\phi);  \vartheta^{(\varOmega)} \big) = \bigg(\frac{\rho_0}{r(\phi,\vartheta^{(\varOmega)})} \, r', \phi \bigg)
\end{equation}
for $(r', \phi) \in \overline{\varOmega(\vartheta^{(\varOmega)})}$. It is assumed that the widths and starting polar angles (in the counter clockwise direction) of the heaters are fixed (i.e.,~known) and that the same applies to the polar angles of the point-like heat sensors.\footnote{It is natural to assume that the widths of the heaters are known, but the same does not necessarily apply to the (starting) polar angles of the heaters and the sensors. However, the precise nature of the available information on the positioning of the heaters and sensors is application-specific, and thus we have decided to work with this relatively general and simple assumption.} In other words, given the starting points $\{y_j(0) \}_{j=1}^J$ and the common width $\eta$ of the heater patches $\{H_j(0)\}_{j=1}^J$ as well as the sensor locations $\{s_r(0)\}_{r=1}^R$ on the boundary of the reference disk $\varOmega(0)$, the dependence of these geometric entities on the shape parameter $\vartheta^{(\varOmega)}$ can be written as
\begin{align*}
H_j\big(\vartheta^{(\varOmega)}\big) & = \big\{ x \in \partial \varOmega(\vartheta^{(\varOmega)}) \ : \ 0 < \dist\!\big(\Phi^{-1}(y_j(0); \vartheta^{(\varOmega)}),x \big) < \eta \big\}, \\[2mm] 
s_r(\vartheta^{(\varOmega)}) &= \Phi^{-1}\big(s_r(0);\vartheta^{(\varOmega)}\big),
\end{align*}
where $\dist(y,x) = \dist(y,x;\vartheta^{(\varOmega)})$ denotes the distance between the points $y,x \in \partial \varOmega(\vartheta^{(\varOmega)})$ along the boundary in the counter clockwise direction.
In particular, the boundary heat sources are still modeled by~\eqref{eq:bsource} under the interpretation that $\chi_j = \chi_j(\,\cdot \,; \vartheta^{(\varOmega)})$ is the characteristic function of $H_j(\vartheta^{(\varOmega)})$ for some $\vartheta^{(\varOmega)} \in \varXi^{N_\varOmega}$ that is clear from the context. 

The parameter-dependent function-valued mappings $a,b \colon \varXi^N \to L_+^\infty(\varOmega(\vartheta^{(\varOmega)}))$ are defined through
  \begin{align}
  \label{eq:abparam}
  \begin{split}
  a(x; \vartheta) &= \bar{a}\big(\Phi(x; \vartheta^{(\varOmega)})\big) + 2  \, \widetilde{a} \, \sum_{i=1}^{N_a} \vartheta^{(a)}_i \varphi^{(a)}_i\!\big(\Phi(x; \vartheta^{(\varOmega)})\big), \\ 
  b(x; \vartheta) &= \bar{b}\big(\Phi(x; \vartheta^{(\varOmega)})\big) + 2  \, \widetilde{b} \, \sum_{i=1}^{N_b} \vartheta^{(b)}_i \varphi^{(b)}_i\!\big(\Phi(x; \vartheta^{(\varOmega)})\big),
  \end{split}
  \end{align}
where $\bar{a},\bar{b} \in L_+^\infty(\varOmega(0))$ are the `default fields' and the bases $\{ \varphi^{(a)}_i \}_{i=1}^{N_a}, \{ \varphi^{(b)}_i \}_{i=1}^{N_b} \in L_+^\infty(\varOmega(0))$ are assumed to form {\em partitions of unity} on $\varOmega(0)$. The free parameters $\widetilde{a}, \widetilde{b} \in \R_+$ are to be tuned so that $a(\, \cdot \,; \vartheta)$ and $b(\, \cdot \,; \vartheta)$ really are elements of $L^\infty_+(\varOmega(\vartheta^{(\varOmega)}))$ for all $\vartheta \in \varXi^N$, that is, by construction they should satisfy
$$
\widetilde{a} < \essinf \bar{a} \qquad {\rm and}  \qquad \widetilde{b} < \essinf \bar{b}.
$$
The bases $\{ \varphi^{(a)}_i \}_{i=1}^{N_a}$ and $\{ \varphi^{(b)}_i \}_{i=1}^{N_b}$ could be formed using,~e.g.,~B-splines or piecewise linear FE basis, but in the numerical experiments of Section~\ref{sec:num} we simply resort to characteristic functions corresponding to pixelwise discretizations of the reference disk $\varOmega(0)$.

The surface conductance could be parametrized in the same manner as $a$ and $b$. However, as the (constant) value of the conductance underneath the heaters may be assumed known (cf.~\cite{toivanen14}) and its value between the heaters is not expected to vary considerably, we employ a simpler parametrization for $c \colon \varXi^N \to L_+^\infty(\partial \varOmega(\vartheta^{(\varOmega)}))$, that is,
\begin{equation}
\label{eq:cparam}
c(x; \vartheta) = 
 \begin{cases}
    c_0 \quad &\text{if } x \in \bigcup_{j=1}^J H_j(\vartheta^{(\varOmega)}),  \\[1mm]
    \bar{c} + 2\,\widetilde{c}\,\sum_{j=1}^{J} \vartheta^{(c)}_j \tilde{\chi}_j \quad &\text{otherwise}.
    \end{cases}
\end{equation}
Here $c_0 \in \R_+$ is the known (high) value of the surface conductance at the heaters, $\bar{c}$ is its default (low) value between the heaters and $\{\tilde{\chi}_j\}_{j=1}^J$ are the characteristic functions of the $J$ curve segments between the heaters. In particular, take note that $N_c = J$. To guarantee the positivity of $c$, it is assumed that $\bar{c} > \widetilde{c}$.

We collect the assumptions on our parametrization in the following lemma.

\begin{lemma}
If the object boundary and the coefficients in \eqref{eq:strong} are parametrized according to \eqref{eq:boundparam}, \eqref{eq:abparam} and \eqref{eq:cparam} under the associated assumptions, then \eqref{eq:weak} has a unique solution $u(\, \cdot \,; \vartheta) \in V$ for all $\vartheta \in \varXi^N$. In other words, the mapping 
$\varXi^N \ni \vartheta \mapsto u(\, \cdot \,; \vartheta) \in V$ is well defined.
\end{lemma}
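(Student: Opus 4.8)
The statement asserts that for every admissible parameter vector $\vartheta \in \varXi^N$, the parametrized problem~\eqref{eq:weak} is a legitimate instance of the abstract parabolic problem whose unique solvability was already quoted from~\cite{dautray92}. So the proof is essentially a bookkeeping argument: we must check that the hypotheses required for that quoted result hold uniformly over the parameter hypercube, namely that (i) the domain $\varOmega(\vartheta^{(\varOmega)})$ is a bounded $C^1$-domain, (ii) the coefficients $a(\cdot;\vartheta)$ and $b(\cdot;\vartheta)$ lie in $L^\infty_+(\varOmega(\vartheta^{(\varOmega)}))$, (iii) the Robin coefficient $c(\cdot;\vartheta)$ lies in $L^\infty_+(\partial\varOmega(\vartheta^{(\varOmega)}))$, and (iv) the source $f$ lies in $L^2(I;L^2(\partial\varOmega(\vartheta^{(\varOmega)})))$. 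Once all four are in place, the cited existence/uniqueness theorem for~\eqref{eq:weak} applies verbatim and gives $u(\cdot;\vartheta)\in V$.

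First I would verify the geometric regularity in~(i). The radial function $r(\cdot;\vartheta^{(\varOmega)})$ from~\eqref{radial_coor} is a finite linear combination of the $C^1$, $2\pi$-periodic B-splines $\psi_i$, hence itself $C^1$ and $2\pi$-periodic; because the $\psi_i$ form a partition of unity and $|\vartheta^{(\varOmega)}_i|\le 1/2$, one has the pointwise bound $\rho_- \le r(\phi;\vartheta^{(\varOmega)}) \le \rho_+$, so $r$ stays bounded away from zero and the polar graph $\partial\varOmega(\vartheta^{(\varOmega)})$ in~\eqref{eq:boundparam} is a simple closed $C^1$ curve enclosing a bounded domain. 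Next, for~(ii), I would use that $\Phi(\cdot;\vartheta^{(\varOmega)})$ in~\eqref{homeodef} is a homeomorphism of $\overline{\varOmega(\vartheta^{(\varOmega)})}$ onto $\overline{\varOmega(0)}$, so $v\mapsto v\circ\Phi(\cdot;\vartheta^{(\varOmega)})$ maps $L^\infty(\varOmega(0))$ into $L^\infty(\varOmega(\vartheta^{(\varOmega)}))$ and preserves essential infima and suprema. Since $\{\varphi^{(a)}_i\}$ is a partition of unity, $\sum_i \vartheta^{(a)}_i\varphi^{(a)}_i$ is bounded in absolute value by $\tfrac12$ pointwise, whence from~\eqref{eq:abparam},
\[
\essinf \bar a - \widetilde a \;\le\; a(\cdot;\vartheta) \;\le\; \|\bar a\|_{L^\infty} + \widetilde a
\]
almost everywhere on $\varOmega(\vartheta^{(\varOmega)})$, and the left-hand side is strictly positive by the standing assumption $\widetilde a < \essinf\bar a$; the same reasoning applies to $b$. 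For~(iii), the parametrization~\eqref{eq:cparam} takes the value $c_0>0$ under the heaters and, elsewhere, $\bar c + 2\widetilde c\sum_j\vartheta^{(c)}_j\tilde\chi_j$, which because the $\tilde\chi_j$ have disjoint supports lies between $\bar c - \widetilde c > 0$ and $\bar c + \widetilde c$; so $c(\cdot;\vartheta)\in L^\infty_+(\partial\varOmega(\vartheta^{(\varOmega)}))$. Finally~(iv) is immediate: $f=g\chi_j$ with $g\in L^2(I)$ and $\chi_j$ the (bounded, measurable) characteristic function of $H_j(\vartheta^{(\varOmega)})$, so $f\in L^2(I;L^2(\partial\varOmega(\vartheta^{(\varOmega)})))$.

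Having checked (i)--(iv), I would conclude by invoking the abstract well-posedness result for~\eqref{eq:weak} recorded earlier in the excerpt (from~\cite{dautray92}): for each $\vartheta\in\varXi^N$ it yields a unique $u(\cdot;\vartheta)\in V = L^2(I;H^1(\varOmega(\vartheta^{(\varOmega)})))\cap C(\overline I;L^2(\varOmega(\vartheta^{(\varOmega)})))$, which is exactly the claim that $\vartheta\mapsto u(\cdot;\vartheta)$ is well defined. The only genuine subtlety — the ``main obstacle'' — is purely notational: the function space $V$ and the spaces in which $a,b,c$ live all depend on $\vartheta^{(\varOmega)}$, so one should be careful to state the conclusion with the $\vartheta$-dependent domain, or alternatively pull everything back to the fixed reference disk $\varOmega(0)$ via $\Phi$, under which~\eqref{eq:weak} becomes a parabolic problem on a fixed domain with $\vartheta$-dependent but still uniformly elliptic and $L^\infty_+$ coefficients; either formulation makes the quoted theorem directly applicable. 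No delicate estimate is needed beyond the elementary partition-of-unity bounds above.
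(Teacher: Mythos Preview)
Your proposal is correct and follows precisely the same route as the paper's own proof: verify that for each fixed $\vartheta\in\varXi^N$ the parametrizations yield a $C^1$-domain and coefficients $a,b\in L^\infty_+(\varOmega)$, $c\in L^\infty_+(\partial\varOmega)$, then invoke the standard parabolic well-posedness theory from~\cite{dautray92}. The paper's proof is just a two-sentence sketch of exactly this argument, whereas you have spelled out the partition-of-unity bounds and the role of $\Phi$ in detail.
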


\begin{proof}
It is easy to see that for any fixed $\vartheta \in \varXi^N$, the equations \eqref{eq:boundparam}, \eqref{eq:abparam} and \eqref{eq:cparam} define a $C^1$-domain $\varOmega(\vartheta^{\varOmega})$ as well as coefficient functions $a(\, \cdot \,; \vartheta), b(\, \cdot \,; \vartheta) \in L^\infty_+(\varOmega(\vartheta^{\varOmega}))$ and $c(\, \cdot \,; \vartheta) \in L^\infty_+(\partial \varOmega(\vartheta^{\varOmega}))$.  The assertion thus follows from the standard theory on parabolic initial/boundary value problems; see,~e.g.,~\cite{dautray92}. 
\end{proof}

The parametrizations in \eqref{eq:abparam} and \eqref{eq:cparam} could easily be made more general; see,~e.g.,~\cite{hyvonen16}, where a logarithmic parametrization for coefficient functions is used. Moreover, dropping the requirement on the bases for the partition of unity property, the most common parametrization for a (random) coefficient in a partial differential equation is arguably the (exponential) Karhunen--Lo\`eve expansion (cf.,~e.g.,~\cite{schwab11}). In any case, it is imperative to choose the discretizations so that for each $\vartheta \in \varXi^N$ the functions $a$, $b$ and $c$ stay positive.

\section{Pseudospectral approximation}
\label{sec:pseudospectral}

Let us assume that an accurate numerical forward solver for the parametrized measurement $U \colon \varXi^N \to \mathbf{R}^M$ is available.
In other words, for any given vector $\vartheta \in \varXi^N = [-1/2, 1/2]^N$, we are able to compute the boundary measurements $\{U_m(\vartheta)\}_{m=1}^M$ according to the parametrized measurement setup and equations presented in Section~\ref{sec:tomo}.
The inaccuracies of the forward solver are neglected for the moment, and therefore the same notation $U$ is used for both the computed forward solution and the exact temperatures.
In this section, our aim is to construct a polynomial surrogate $U^{(\mathcal{K})} \colon \varXi^N \to \mathbf{R}^M$ that satisfies $U^\mathcal{(K)} \approx U$ in some appropriate sense.
The approximation parameter $\mathcal{K} \subset \mathbf{N}_0^N$ is a multi-index set that is related to projection orders and polynomial degrees as explained shortly.
The underlying motivation for using a polynomial surrogate is the possibility of faster evaluation of the forward solution and its derivatives when solving the inverse problem of thermal tomography by an iterative scheme.
Moreover, the employment of a parametric model simplifies the handling of the unknown boundary shape (cf.~\cite{darde13a,darde13b,hyvonen16}).

The construction of the surrogate is based on the SPAM introduced in \cite{constantine12} and, in particular, its adaptive version analyzed in \cite{conrad13}.
We briefly outline the algorithm in what follows and refer the interested readers to, e.g., \cite{gerstner03,winokur13,winokur15} for more details.
For simplicity of exposition, in this section, a common approximation parameter $\mathcal{K}$ and the same set of basis polynomials are assumed for every measurement component.
Alternatively, the surrogate could be constructed separately for each component of $U$.
The latter is equivalent to considering $M$ real-valued mappings $U_m$, each in turn, instead of one vector-valued mapping $U$.
Thus, the case of component-wise approximations can be treated as a special case of the procedure presented here.

\subsection{Sparse pseudospectral approximation method (SPAM)}
\label{ssec:spam}

The polynomial surrogate $U^{(\mathcal{K})}$ is written as linear combinations of orthonormal basis polynomials.
More precisely,
  \begin{equation}
  \label{eq:surrogate}
  U_m^{(\mathcal{K})}(\vartheta) \, = \sum_{\boldsymbol{i} \in \mathcal{P}(\mathcal{K}) } \alpha_{m,\boldsymbol{i}} L_{\boldsymbol{i}}(\vartheta),
  \qquad 1 \leq m \leq M, \quad \vartheta \in \varXi^N,
  \end{equation}
where $\alpha_{m,\boldsymbol{i}} \in \mathbf{R}$ are the expansion coefficients and $L_{\boldsymbol{i}}$ are suitably transformed multivariate Legendre polynomials with degrees defined by the multi-index set $\mathcal{P}(\mathcal{K}) \subset \mathbf{N}_0^N$ that depends on the approximation parameter via the chosen quadrature rule.
The basis polynomials satisfy
  \begin{equation}
  \label{eq:multiorth}
  \int_{\varXi^N} L_{\boldsymbol{i}}(\vartheta) L_{\boldsymbol{j}}(\vartheta) \,\mathrm{d} \vartheta \, = \,
    \begin{cases}
    1 \quad \text{if } i_n=j_n, \ 1 \leq n \leq N, \\
    0 \quad \text{otherwise}
    \end{cases}
  \end{equation}
for all $\boldsymbol{i}, \boldsymbol{j} \in \mathcal{P}(\mathcal{K})$
and they can be naturally constructed as products of univariate polynomials, i.e.,
  \begin{equation}
  L_{\boldsymbol{i}}(\vartheta) = \prod_{n=1}^N \ell_{i_n}(\vartheta_n),
  \end{equation}
where $\ell_{i}$ denotes the $i$th order orthonormal(ized) Legendre polynomial on $\varXi = [-1/2, 1/2]$.
We refer to \cite{gautschi04} for more details about orthogonal polynomials.
Hereafter, the subscript in $\vartheta_n$ denotes the dimension (i.e., $\vartheta_n$ is the $n$th element of the vector $\vartheta$), whereas superscripts are used to distinguish between different multidimensional nodes, i.e., vectors in $\varXi^N$.

The sparse pseudospectral approximation is built on sequential \emph{full tensor projections}.
Let us first discuss univariate quadratures and projections.
A general quadrature rule of order $k \geq0$ is a linear operator 
$\mathscr{Q}_k \colon C(\varXi) \to \mathbf{R}$, defined as\footnote{A quadrature can naturally be applied to any function having well defined values at the corresponding nodes, not only to continuous ones. The same is true for the associated projections.}
  \begin{equation}
  \label{eq:uniquad}
  \mathscr{Q}_k(F) \coloneqq \sum_{i=0}^{q(k)} F\big(\vartheta^{(k,i)}\big) w^{(k,i)}
  \approx \int_\varXi F(\vartheta) \,\mathrm{d}\vartheta,
  \end{equation}
where $q(k)+1$ is the number of quadrature nodes and $\{(\vartheta^{(i,k)}, w^{(i,k)})\}_{i=0}^{q(k)} \subset \varXi \times \mathbf{R}$ are the nodes and weights.
The rule is said to have an exactness of degree $s(k)$ if the approximation in \eqref{eq:uniquad} is an equality for all polynomials $F \in \mathbf{P}_{s(k)}$.
The function $s \colon \mathbf{N}_0 \to \mathbf{N}_0$ is assumed to be nondecreasing.
For $k \geq 0$, the $k$th order \emph{pseudospectral projection} onto the Legendre basis is $\mathscr{P}_k \colon C(\varXi) \to \mathbf{P}_{p(k)}$,
  \begin{equation}
  \mathscr{P}_k(F) (\vartheta) \coloneqq \sum_{i=0}^{p(k)} \widehat{F}_i \ell_i(\vartheta), \qquad \widehat{F}_i = \mathscr{Q}_k(F \ell_i),
  \end{equation}
where $p(k) = \lfloor s(k) / 2 \rfloor$. It is easy to verify that $\mathscr{P}_k$ really is a projection, that is, an identity map (i.e., exact) on $\mathbf{P}_{p(k)}$.
Finally, the \emph{difference projection} operator for $k \geq 1$ is $\mathscr{D}_k \colon C(\varXi) \to \mathbf{P}_{p(k)}$,
  \begin{equation}
  \mathscr{D}_k \coloneqq \mathscr{P}_k - \mathscr{P}_{k-1}
  \end{equation}
and for $k=0$ we set $\mathscr{D}_0 = \mathscr{P}_0$. Notice that $\mathscr{D}_k$ is not actually a projection itself, but only a difference of two (unless $k=0$).

The Gauss--Legendre quadrature admits the values $q(k) = k$ and $s(k) = 2k+1$, yielding the convenient property $p(k) = k$.
In contrast, the popular nested Clenshaw--Curtis rule grows exponentially, so that increasing the projection order by one gives many new polynomials.
See, e.g., \cite{conrad13} for more details about different quadrature rules, including the Gauss--Patterson rule.

Before introducing the multidimensional counterparts of the linear operators presented above, we need to fix some notation.
Firstly, $\mathbf{P}_{\boldsymbol{k}} \coloneqq (\mathbf{P}_{k_1} \otimes \cdots \otimes \mathbf{P}_{k_N})$ denotes the space of $N$-variate polynomials with the given maximum univariate degrees $k_1, \ldots, k_N$.
Secondly, the vectorization $\boldsymbol{y} \colon \mathbf{N}_0^N \to \mathbf{N}_0^N$ of a function $y \colon \mathbf{N}_0 \to \mathbf{N}_0$ is defined via $\boldsymbol{y}(\boldsymbol{k}) \coloneqq (y(k_1), \ldots, y(k_N))$.
Lastly, for a multi-index $\boldsymbol{k} \in \mathbf{N}_0^N$, the induced full tensor index set is
  \begin{equation}
  \label{eq:ftset}
  \mathcal{T}(\boldsymbol{k}) \coloneqq \{ \boldsymbol{i} \in \mathbf{N}_0^N : 0 \leq i_n \leq k_n, \; n=1,\ldots,N \},
  \end{equation}
which has the cardinality
  \begin{equation}
  \label{eq:ftcard}
  \lvert \mathcal{T}(\boldsymbol{k}) \rvert = \prod_{n=1}^N (k_n+1).
  \end{equation}

A full tensor quadrature $\mathscr{Q}_{\boldsymbol{k}} \colon C(\varXi^N) \to \mathbf{R}$ of order $\boldsymbol{k} \in \mathbf{N}_0^N$ is
  \begin{align}
  \mathscr{Q}_{\boldsymbol{k}}(F) &\coloneqq (\mathscr{Q}_{k_1} \otimes \cdots \otimes \mathscr{Q}_{k_N})(F) \\
  &=\sum_{i_1=0}^{q(k_1)} \cdots \sum_{i_N=0}^{q(k_N)} F(\vartheta^{(k_1,i_1)}, \ldots, \vartheta^{(k_N,i_N)}) w^{(k_1,i_1)} \cdots w^{(k_N,i_N)} \\
  &=\sum_{\boldsymbol{i} \in \mathcal{T}(\boldsymbol{q}(\boldsymbol{k}))} F(\vartheta^{(\boldsymbol{k},\boldsymbol{i})}) w^{(\boldsymbol{k},\boldsymbol{i})},
  \end{align}
where $\vartheta^{(\boldsymbol{k},\boldsymbol{i})} \coloneqq (\vartheta^{(k_1,i_1)}, \ldots, \vartheta^{(k_N,i_N)}) \in \varXi^N$ and
$w^{(\boldsymbol{k},\boldsymbol{i})} \coloneqq \prod_{n=1}^N w^{(k_n,i_n)} \in \mathbf{R}$ denote the tensorized quadrature nodes and weights, respectively.
It is obvious from \eqref{eq:ftcard} that for a high dimension $N$, the full tensor quadratures are computationally feasible only if the order $\boldsymbol{k}$ is sparse, i.e., consists mostly of zeros.
The full tensor pseudospectral projection $\mathscr{P}_{\boldsymbol{k}} \colon C(\varXi^N) \to \mathbf{P}_{\boldsymbol{p}(\boldsymbol{k})}$ of order $\boldsymbol{k} \in \mathbf{N}_0^P$ is defined as
  \begin{align}
  \mathscr{P}_{\boldsymbol{k}}(F)(\vartheta) :\!\!&= (\mathscr{P}_{k_1} \otimes \cdots \otimes \mathscr{P}_{k_N})(F)(\vartheta) \\
  &= \sum_{\boldsymbol{i} \in \mathcal{T}(\boldsymbol{p}(\boldsymbol{k}))} \widehat{F}_{\boldsymbol{i}} L_{\boldsymbol{i}}(\vartheta),
  \end{align}
where the pseudospectral coefficients are
  \begin{equation}
  \widehat{F}_{\boldsymbol{i}} \coloneqq \mathscr{Q}_{\boldsymbol{k}}(F L_{\boldsymbol{i}}).
  \end{equation}
The tensorized difference projection operator $\mathscr{D}_{\boldsymbol{k}} \colon C(\varXi^N) \to \mathbf{P}_{\boldsymbol{p}(\boldsymbol{k})}$ is unsurprisingly
  \begin{equation}
  \label{eq:tensordiff}
  \mathscr{D}_{\boldsymbol{k}} \coloneqq \mathscr{D}_{k_1} \otimes \cdots \otimes \mathscr{D}_{k_P} =
  \mathscr{P}_{\boldsymbol{k}} + \sum_{\boldsymbol{i} \in \mathcal{B}(\boldsymbol{k})} (-1)^{\lVert \boldsymbol{k}-\boldsymbol{i} \rVert_1} \mathscr{P}_{\boldsymbol{i}},
  \end{equation}
where
  \begin{equation}
  \label{eq:backward}
  \mathcal{B}(\boldsymbol{k}) \coloneqq \{ \boldsymbol{i} \in \mathbf{N}_0^N : \lVert \boldsymbol{k} - \boldsymbol{i} \rVert_\infty = 1, \; i_n \leq k_n,\; 1 \leq n \leq N \}
  \end{equation}
is (a part of) the \emph{backward neighborhood} of the multi-index $\boldsymbol{k} \in \mathbf{N}_0^N$ (see also \cite[Proposition 1.7]{kaarnioja13}).
Note that $0 \leq \lvert \mathcal{B}(\boldsymbol{k}) \rvert \leq 2^N-1$.

In the univariate case, it is easy to see that
  \begin{equation}
  \label{eq:telescope}
  \mathscr{P}_k(F) = \sum_{i=0}^k \mathscr{D}_i(F)
  \end{equation}
due to the telescopic property of the sum.
On the other hand, it is well-known that the approximation $\mathscr{P}_k(F) \approx F$ is good if $F$ is smooth enough and $k$ is sufficiently large; see, e.g., \cite{CHQZ2} for proofs and details.
The sparse pseudospectral approximation generalizes the idea of \eqref{eq:telescope} to higher dimensions via the Smolyak's construction \cite{kaarnioja13}
  \begin{equation}
  \label{eq:smolyak}
  F \approx F^{(\mathcal{K})} \coloneqq \sum_{\boldsymbol{i} \in \mathcal{K}} \mathscr{D}_{\boldsymbol{i}}(F),
  \end{equation}
where $\mathcal{K} \subset \mathbf{N}_0^N$ is (usually) a sparse set, meaning that each multi-index $\boldsymbol{i} \in \mathcal{K}$ contains many zeros so that the workloads of the quadratures entangled in \eqref{eq:tensordiff} stay feasible.
It follows from \eqref{eq:tensordiff} that $F^{(\mathcal{K})}$ is a nontrivial linear combination of the full tensor projections $\{ \mathscr{P}_{\boldsymbol{i}} \}_{\boldsymbol{i} \in \mathcal{K}}$.
Naturally, the projections can further be written as linear combinations of Legendre polynomials.
The set of Legendre degrees in \eqref{eq:smolyak} is denoted by $\mathcal{P}(\mathcal{K})$ which (implicitly) depends on the chosen quadrature.
For the Gauss--Legendre quadrature we have $\mathcal{P}(\mathcal{K}) = \mathcal{K}$.

The approximation \eqref{eq:smolyak} is valid only if the index set $\mathcal{K}$ is \emph{Smolyak admissible}, which means that every $\boldsymbol{i} \in \mathcal{K}$ satisfies
  \begin{equation}
  \label{eq:adm}
  \text{if} \quad i_n > 0, \quad \text{then} \quad \boldsymbol{i}-\boldsymbol{e}_n \in \mathcal{K}
  \end{equation}
for each $n=1, \ldots, N$.
This makes the sum in \eqref{eq:smolyak} telescope-like.
Clearly, full tensor index sets are Smolyak admissible, but a full tensor index set being sparse means that the approximation is poor for most of the dimensions.
A common alternative to the full tensor index set is the total order index set, which for $k \in \mathbf{N}_0$ is
  \begin{equation}
  \label{eq:totalorder}
  \mathcal{T}_{\text{tot}}(k) \coloneqq \Big\{ \boldsymbol{i} \in \mathbf{N}_0^N : \sum_{n=1}^N i_n \leq k \Big\}.
  \end{equation}
The cardinality of the total order index set is
  \begin{equation}
  \label{eq:tdcard}
  \lvert \mathcal{T}_{\text{tot}}(k) \rvert = \binom{N+k}{k} \sim N^k,
  \end{equation}
which for a large dimension $N$ is significantly less than the cardinality $(k+1)^N$ of the full tensor index set based on a constant vector $(k,\ldots,k) \in \mathbf{N}_0^N$.

\subsection{Adaptive SPAM}
\label{ssec:adaptive}

The aim of the adaptive SPAM is to construct a sparse index set $\mathcal{K}$ in \eqref{eq:smolyak} so that the approximation is good while the cardinality of the set is kept as small as possible.
In many applications, especially when the target function is expensive to evaluate, the main trouble with a too large index set is the huge number of quadrature nodes, since the target function has to be evaluated at all of them.
In our application, however, there is also another incentive for having a small index set $\mathcal{K}$ and a small polynomial set $\mathcal{P}(\mathcal{K})$, namely, the evaluation cost of the surrogate and its derivatives when the inverse problem of thermal tomography is solved with an iterative method.

From now on, we consider the temperature measurement vector $U \colon \varXi^N \to \R^M$ instead of a generic real-valued function $F$, and thus we construct a polynomial approximation $U^{(\mathcal{K})}$ for $U$ as in \eqref{eq:surrogate}.
The quadrature and projection operators directly generalize to the vector-valued case.
As mentioned in the beginning of this section, it is also possible to construct the sets of basis polynomials individually for each component, but for the sake of generality we present the adaptive algorithm for the vector-valued function $U$.

The algorithm for constructing $\mathcal{K}$ is essentially the same as the adaptive quadrature presented in \cite{gerstner03} and goes as follows.
At the beginning, we choose $\mathcal{K} = \{ \boldsymbol{0} \} \subset \mathbf{N}_0^N$.
Then, at each iteration, we select one not-yet-selected {\em critical multi-index} $\boldsymbol{k} \in \mathcal{K}$ according to the selection rule specified shortly.
All the \emph{admissible forward neighbors}
  \begin{equation}
  \mathcal{K}^+(\boldsymbol{k}) \coloneqq \{ \boldsymbol{i} \in \mathbf{N}_0^N : \boldsymbol{i} = \boldsymbol{k}+\boldsymbol{e}_j,\; 1 \leq j \leq N,\; \mathcal{K} \cup \{ \boldsymbol{i} \} \text{ is Smolyak admissible} \}
  \end{equation}
of the selected critical index are then added to $\mathcal{K}$ and this selection-addition procedure is continued until some suitable stopping criterion is satisfied.
The determination of the critical indices encompasses pseudospectral projections so that when the algorithm terminates, we have not only computed the index set $\mathcal{K}$ but also all the necessary information that is needed when evaluating the polynomial $U^{(\mathcal{K})}$ for any given parameter vector $\vartheta \in \varXi^N$.
The expansion coefficients in \eqref{eq:surrogate} correspond to a matrix $\alpha \in \mathbf{R}^{M \times \lvert \mathcal{P}(\mathcal{K}) \rvert}$ and the polynomial degrees can be stored in a sparse matrix $\mathcal{P}(\mathcal{K}) \in \mathbf{N}_0^{\lvert \mathcal{P}(\mathcal{K}) \rvert \times N}$.

There are different selection criteria for the critical multi-index, but typically they are based on the norms of the difference projection operators.
In this paper, we simply consider the norms
  \begin{equation}
  \epsilon(\boldsymbol{k}) \coloneqq \lVert \mathscr{D}_{\boldsymbol{k}}(U) \rVert_{[L^2(\varXi^N)]^M} = \left( \sum_{m=1}^M \lVert \mathscr{D}_{\boldsymbol{k}}(U_m) \rVert_{L^2(\varXi^N)}^2 \right)^{1/2},
  \end{equation}
where
  \begin{equation}
  \label{eq:DkUm}
  \mathscr{D}_{\boldsymbol{k}}(U_m) \, \eqqcolon \sum_{\boldsymbol{i} \in \mathcal{T}(\boldsymbol{p}(\boldsymbol{k}))} \beta_{m,\boldsymbol{i}}^{(\boldsymbol{k})} L_{\boldsymbol{i}}
  \end{equation}
for some coefficients $\beta_{m,\boldsymbol{i}}^{(\boldsymbol{k})} \in \mathbf{R}$ that can be obtained by expanding the expression in \eqref{eq:tensordiff}.
Among the not-yet-selected multi-indices, the one corresponding to the largest $\epsilon$-norm has (loosely speaking) affected the approximation most and thus can be considered the critical index, whose admissible forward neighbors are added to the index set.
By using the orthonormality of the Legendre polynomials, we can write
  \begin{equation}
  \lVert \mathscr{D}_{\boldsymbol{k}}(U_m) \rVert_{L^2(\varXi^N)} = \left( \sum_{\boldsymbol{i} \in \mathcal{T}(\boldsymbol{p}(\boldsymbol{k}))} \left(\beta_{m,\boldsymbol{i}}^{(\boldsymbol{k})}\right)^2 \right)^{1/2},
  \end{equation}
so that
  \begin{equation}
  \epsilon(\boldsymbol{k}) = \lVert \beta^{(\boldsymbol{k})} \rVert_F,
  \end{equation}
which is the Frobenius norm of the coefficient matrix in \eqref{eq:DkUm}.
According to the Smolyak's construction \eqref{eq:smolyak}, accumulating the coefficients $\beta^{(\boldsymbol{k})}$ computed during the algorithm results in the approximation $U^{(\mathcal{K})}$, and the polynomial degrees $\mathcal{P}(\mathcal{K})$ in \eqref{eq:surrogate} are just the union of the degrees appearing in the difference projections of the form \eqref{eq:DkUm}.

\begin{algorithm}[b!]
  \label{alg:adaptive}
  \SetKwInOut{Input}{Input}
  \SetKwInOut{Output}{Output}

  \underline{function adaptiveSPAM} $(U)$\;
  \Input{Function $U \in [C(\varXi^N)]^M$}
  \Output{Polynomial $U^{(\mathcal{K})} \approx U$ of the form \eqref{eq:surrogate}}

  $U^{(\mathcal{K})} \gets 0$\tcc*[r]{zero polynomial to start with}
  $\mathcal{K} \gets \emptyset$\tcc*[r]{indices already projected}
  $\mathcal{J} \gets \{\boldsymbol{0}\}$\tcc*[r]{indices to be projected next}

  \While{$\lvert \mathcal{P}(\mathcal{K}) \rvert < \text{stopping criterion}$}
    {
    \For{$\boldsymbol{j} \in \mathcal{J}$}
      {
      Compute and store $\mathscr{P}_{\boldsymbol{j}}(U) \in \mathbf{P}_{\boldsymbol{p}(\boldsymbol{j})}$\tcc*[r]{forward solver needed}
      $\mathscr{D}_{\boldsymbol{j}}(U) \gets \mathscr{P}_{\boldsymbol{j}}(U) +
      \sum_{\boldsymbol{i} \in \mathcal{B}(\boldsymbol{j})} (-1)^{\lVert \boldsymbol{j}-\boldsymbol{i} \rVert_1} \mathscr{P}_{\boldsymbol{i}}(U)$\tcc*[r]{nothing to solve}
      Compute and store $\epsilon(\boldsymbol{j})$\;
      $U^{(\mathcal{K})} \gets U^{(\mathcal{K})} + \mathscr{D}_{\boldsymbol{j}}(U)$\tcc*[r]{accumulate output ($\mathcal{P}(\mathcal{K})$ and $\alpha$)}
      }
    $\mathcal{K} \gets \mathcal{K} \cup \mathcal{J}$\;
    $\boldsymbol{k} \gets \argmax_{\boldsymbol{i} \in \mathcal{K}} \epsilon(\boldsymbol{i})$\tcc*[r]{new critical index}
    $\epsilon(\boldsymbol{k}) \gets -\infty$\tcc*[r]{do not select again}
    $\mathcal{J} \gets \mathcal{K}^+(\boldsymbol{k})$\tcc*[r]{admissible forward neighbors}
    }
  \caption{Adaptive sparse pseudospectral approximation}
\end{algorithm}

In addition to the $\epsilon$-norm introduced above, one may want to weigh different multi-index candidates based on, e.g., how much it costs to compute the associated projections or how many new polynomials one is going to obtain.
See, e.g., \cite{conrad13} for work-considering algorithms.
For Gauss--Legendre quadrature, adding one multi-index $\boldsymbol{k}$ corresponds to adding precisely one new polynomial, namely $L_{\boldsymbol{k}}$.
In consequence, as we exclusively resort to Gauss--Legendre rule in our numerical experiments, we employ $\epsilon$ as the selection criterion.

The pseudocode of the method is presented in Algorithm~\ref{alg:adaptive}.
Note that $0 \leq \lvert \mathcal{K}^+(\boldsymbol{k}) \rvert \leq N$ and the Smolyak admissibility implies that $\mathcal{B}(\boldsymbol{i}) \subset \mathcal{K}$ for each $\boldsymbol{i} \in \mathcal{K}^+(\boldsymbol{k})$, see \eqref{eq:backward} and \eqref{eq:adm}.
Thus, all the projections appearing on line 8 in Algorithm~\ref{alg:adaptive} have already been computed and the only line where the numerical forward solver for $U$ is needed is line 7.
Recall that by a forward solver we mean a computational function that returns the boundary measurement vector $U(\vartheta) \in \mathbf{R}^M$ for a given parameter vector $\vartheta \in \varXi^N$.
In practice, such a solver only returns numerical approximations.

\section{Numerical experiments}
\label{sec:num}

In this section, we present numerical examples demonstrating the feasibility of the proposed two-phase approach to thermal tomography:
\begin{enumerate}
\item In the offline phase, a polynomial surrogate is formed based (only) on generic information about the measurement setup, including approximate shape and size of the imaged body, the expected mean levels of the to-be-reconstructed parameters, and the number and size of the heaters and sensors. Because such computations can be performed prior to taking any measurements, one can assume the availability of considerable amounts of computation time and power for completing these tasks.
\item Once the measurements are in hand, the online phase consists merely of solving a least squares minimization problem that does not involve the forward solver but only its polynomial surrogate. Since the surrogate and its derivatives can be efficiently evaluated, this leads to fast reconstructions.
\end{enumerate}
We first study the accuracy of the polynomial surrogate in Section~\ref{ssec:surrtest}.
More precisely, the values returned by the surrogate are compared with the values returned by the numerical solver upon which the surrogate is based.
Naturally, the numerical solver itself returns only approximate measurement vectors for given parameter vectors, but since the surrogate can be constructed in the offline phase, one could in principle choose as accurate solver as desired without affecting the efficiency of the actual reconstruction algorithm in the online phase.
The errors in spatial and temporal discretizations are, however, taken into account when choosing the regularization or the Bayesian distributions for the inverse problem in Section~\ref{ssec:invprob}. On the other hand, since the number of employed basis polynomials affects the cost of evaluating the surrogate and its derivatives in the online phase \cite{mustonen15}, one should exploit adaptivity to keep the number of polynomials as low as possible without compromising the accuracy of the surrogate.

Section~\ref{ssec:invprob} addresses the inverse problem of thermal tomography by comparing some target functions $a$, $b$, $c$ and $\partial \varOmega$ with the corresponding reconstructions; these considerations correspond to the online phase.
The reconstructions are based on simulated measurement data, which are generated by a highly accurate numerical solver and further contaminated by a small amount of artificial noise in order to avoid any kind of an \emph{inverse crime}.
We also examine how reconstructions are affected if the uncertainties in the coefficient $c$ and the boundary $\partial \varOmega$ are not taken into account.

Table~\ref{tab:parvalues} lists the values defining the parametrized measurement setup used in all examples that follow.
In particular, the zero vector $\vartheta=0$ corresponds to the unit disk with constant fields $a \equiv b \equiv 0.55$ and the transfer coefficient $c = 0.11$ between the heaters.
Both the heater starting edges and the sensors are positioned equiangledly, and when the domain is the unit disk, the sensors are located exactly at the midpoints between the heaters.
The geometry is illustrated in Figure~\ref{fig:setup}; see also Section~\ref{sec:tomo} for more details on the parametrization.
The total number of parameters is either $N=104$ or $N=984$ depending on the discretization of the fields $a$ and $b$.
The number of measurements is always $M=JRM_T=384$.

\begin{table}
  \caption{Parameter values for the numerical experiments.}
  \begin{center}
  \begin{tabular}{c | c | l}
  Parameter & Value & Notes \bigstrut[b] \\
  \hline
  $N_a = N_b$ & $40 \text{ or } 480$ & Discretization of the conductivity $a$ and the capacity $b$ \bigstrut[t] \\
  $J = R = N_c$ & $8$ & Number of heaters, sensors and discretization of the coefficient $c$ \\
  $N_\varOmega$ & $16$ & Discretization of the boundary \\
  $\bar{a} = \bar{b}$ & $0.55$ & \multirow{2}{*}{$\left. \vphantom{\begin{tabular}{c}3\\3\end{tabular}}\right\}$ $0.1 \leq a,b \leq 1$} \\
  $\widetilde{a} = \widetilde{b}$ & $0.45$ & \\
  $\bar{c}$ & $0.11$ & \multirow{2}{*}{$\left. \vphantom{\begin{tabular}{c}3\\3\end{tabular}}\right\}$ $0.02 \leq c \leq 0.2$ between the heaters} \\
  $\widetilde{c}$ & $0.09$ & \\
  $c_0$ & $10$ & $c = 10$ on the heaters \\
  $\rho_0$ & $1$ & Radius of the reference domain $\varOmega(0)$ \\
  $\rho_-$ & $0.8$ & Minimum radius \\
  $\rho_+$ & $1.2$ & Maximum radius \\
  $T$ & $2$ & Maximum time \\
  $M_T$ & $6$ & Number of measurements in time \\
  $t_i$ & $ iT/M_T$ & Uniform measurement times ($i=1,\ldots,M_T$)\\
  $g(t)$ & $5t$ & Time profile of the active heater \\
  $\eta$ & $\pi/8$ & Heater width \bigstrut[b] \\
  \hline
  \end{tabular}
  \end{center}
  \label{tab:parvalues}
\end{table}

\begin{figure}[t]
  \centering
  \includegraphics{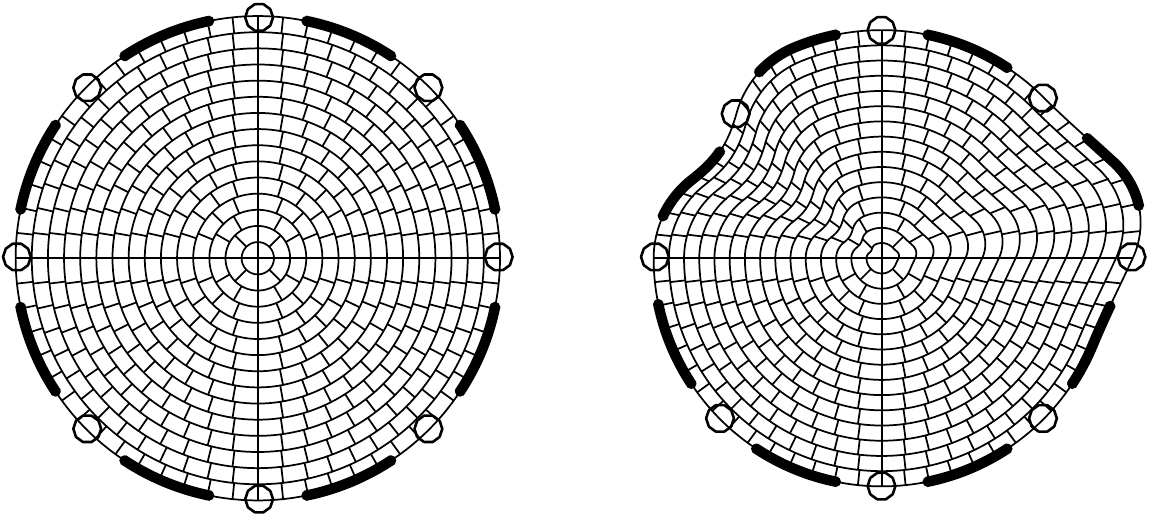}
  \caption{Parametrization of the measurement setup for the numerical experiments. The heaters are indicated by the bold boundary segments and the circles denote the sensors. The discretization of the conductivity and the capacity is shown for $N_a = N_b = 480$. \emph{Left:} Unit disk $\varOmega(0)$. \emph{Right:} Domain that is obtained by perturbing two out of $N_\varOmega=16$ splines.}
  \label{fig:setup}
\end{figure}

\subsection{Accuracy of the surrogate}
\label{ssec:surrtest}

We first focus on the accuracy of the polynomial surrogate which is constructed in three different ways.
The parametrization of the measurement setup is such that $N_a = N_b = 40$ and thus $N=104$ (this coarse discretization is visible in Figures \ref{fig:reco104c} and \ref{fig:reco104bnd}).
The first surrogate $U^{(\mathcal{K})} \colon \varXi^N \to \mathbf{R}^M$ is computed by choosing $\mathcal{P}(\mathcal{K}) = \mathcal{K} = \mathcal{T}_{\text{tot}}(2)$, see \eqref{eq:totalorder}.
That is, adaptivity is not used, but instead we fix the polynomials to those resulting from all full tensor projections having total order at most~$2$.
(Recall that for Gauss--Legendre quadrature it holds that $\mathcal{P}(\mathcal{K}) = \mathcal{K}$, i.e., the polynomial degrees are precisely the projection orders.)
The number of polynomials is $\lvert \mathcal{P}(\mathcal{K}) \rvert = 5565$, which follows from \eqref{eq:tdcard}.
For the second surrogate, we apply the adaptive SPAM as described in Section~\ref{ssec:adaptive}.
The algorithm is run until $\lvert \mathcal{P}(\mathcal{K}) \rvert \geq 5565$ and the same polynomial basis $\mathcal{P}(\mathcal{K})$ is constructed for all measurement components.
The third surrogate is also constructed adaptively, but now separately for each measurement component, so that there are $M$ different polynomial bases, each having cardinality of about $5565$.
In order to avoid too costly computations, the maximum degree for \emph{univariate} Legendre polynomials is set to~$4$ in the third case.

The accuracy of these three surrogates is tested by drawing independent random parameter vectors $\{\vartheta^{(q)}\}_{q=1}^Q \subset \varXi^N$ from two different distributions.
In the first case, all components of the random vector are independently and uniformly distributed on $\varXi = [-1/2,1/2]$.
The second distribution corresponds to (truncated) log-normal random fields for the conductivity $a$ and the heat capacity $b$.
More precisely, $a$ and $b$ are considered as discrete log-normal random fields with the (common) underlying normal distribution having the mean $\log(0.55)$ and the covariance function
  \begin{equation}
  \label{eq:normcovar}
  K(x,y) = \varsigma^2 \exp\mathopen{}\left( - \frac{ \lVert x-y \rVert_2^2 }{2\lambda^2} \right)\mathclose{}
  \end{equation}
with $\varsigma = 0.5$ and $\lambda = 1/3$.
In other words, $K(x,y)$ can be interpreted as the covariance matrix of an $N_a = N_b$ dimensional normal distribution when $x,y \in \varOmega$ run through the centers of the `pixels' corresponding to the characteristic functions $\varphi_i^{(a)} = \varphi_i^{(b)}$ in the unit disk; cf.~\eqref{eq:abparam} and Figure~\ref{fig:setup}.
The fields $a$ and $b$ are drawn mutually independently as are the remaining $N_c + N_\varOmega$ components of $\vartheta^{(q)}$, which again follow the uniform distribution. If a random draw for either of the two fields results in a pixel-value that corresponds to a component $\vartheta_i^{(q)} < -1/2$ (resp., $\vartheta_i^{(q)} > 1/2$), the field is truncated by redefining $\vartheta_i^{(q)} =-1/2$ (resp., $\vartheta_i^{(q)} = 1/2$).

For each surrogate $U^{(\mathcal{K})}$, we define the pointwise error, the mean error and the (sample) variance as
  \begin{equation}
  e_{\mathcal{K}}^{(q)} \coloneqq \big\lVert U(\vartheta^{(q)}) - U^{(\mathcal{K})}(\vartheta^{(q)}) \big\rVert_2, \qquad
  \mu_{\mathcal{K}} \coloneqq \frac{1}{Q} \sum_{q=1}^Q e_{\mathcal{K}}^{(q)} \quad \text{and} \quad
  \sigma_{\mathcal{K}}^2 \coloneqq \frac{1}{Q-1} \sum_{q=1}^Q ( e_{\mathcal{K}}^{(q)} - \mu_{\mathcal{K}} )^2,
  \end{equation}
respectively.
Here, $U$ denotes our numerical forward solver that is based on piecewise linear finite elements with meshes having a couple of thousands of nodes and the Crank--Nicolson time step of size $1/30$.
We emphasize that the same solver is used for both the surrogate construction and the computation of the reference samples $\{U(\vartheta^{(q)})\}_{q=1}^Q$ since the aim at this point is to examine the accuracy of the polynomial approximation, not the accuracy of the numerical forward solver itself.

\begin{table}
  \caption{Mean errors and variances for two random parameter distributions and three surrogates $U^{(\mathcal{K})}$: Total order polynomials of degree 2 (T2), adaptively constructed polynomials with a common basis (adapt.) and adaptively constructed polynomials with separate bases ($m$-adapt.).}
  \begin{center}
  \begin{tabular}{l | c c c | c c c}
  & \multicolumn{3}{c|}{Mean ($\mu_{\mathcal{K}} \cdot 10^{2}$)} & \multicolumn{3}{c}{Variance ($\sigma^2_{\mathcal{K}} \cdot 10^{2}$)} \\
  & T2 & adapt. & $m$-adapt. & T2 & adapt. & $m$-adapt. \bigstrut[b] \\
  \hline
  Uniform & $58.3$ & $20.2$ & $9.08$ & $11.3$ & $1.13$ & $0.135$ \bigstrut[t] \\
  Log-normal & $50.2$ & $17.2$ & $7.58$ & $6.08$ & $0.775$ & $0.127$ \bigstrut[b] \\
  \hline
  \end{tabular}
  \end{center}
  \label{tab:errors}
\end{table}

Table~\ref{tab:errors} lists the mean errors and variances corresponding to the two distributions for the parameter vector described above with sample size $Q=1000$. It is evident that adaptivity improves the accuracy of the surrogate: the mean errors for the (commonly) adaptively constructed surrogate are about one third and the sample variances about one tenth of the corresponding values for the total order surrogate with $\mathcal{P}(\mathcal{K}) = \mathcal{T}_{\text{tot}}(2)$. Because the adaptively constructed basis has approximately the same number of polynomials as the total order basis, the evaluation and differentiation costs of these two surrogates are expected to be roughly the same.
Although the complexity analysis in \cite{mustonen15} considers only total order bases, in practice there seems to be no significant difference in computational work between these two types of surrogates.
In particular, for a given parameter vector, the evaluation of the surrogate still reduces to a simple matrix-vector product even if adaptivity is used.
We refer to \cite{mustonen15} for more detailed discussion on computational work involved when using the surrogate, and also remind the reader that in any case the cost of evaluating and differentiating the surrogate does not depend on the discretization of the forward solver that is used to form the surrogate.
What is more, the adaptivity does not essentially slow down the offline phase, i.e., the construction of the surrogate, albeit some more evaluations of the forward solver may be needed.

Constructing the polynomial basis separately for each measurement component further enhances the surrogate, as seen in Table~\ref{tab:errors}: the mean errors are only about $15\%$ of the corresponding numbers for $\mathcal{P}(\mathcal{K}) = \mathcal{T}_{\text{tot}}(2)$. However, in this case the surrogate construction algorithm has to be run several times, and the resulting surrogate cannot be evaluated as a simple matrix-vector product as in the case of a common basis (cf.~\cite{mustonen15}).

When the random realizations for $a$ and $b$ are smoother (i.e., the values of adjacent `pixels' are expected to be strongly correlated) as they are in the log-normal case, the mean errors seem to be slightly smaller compared to the case of random parameter vectors with independently and uniformly distributed components.
For the zero parameter vector $\vartheta = 0$, the errors for the total order, adaptively constructed and adaptively/separately constructed surrogates are $e_{\mathcal{K}}^{(q)}(0) = 9.60\cdot 10^{-2}$, $4.26\cdot 10^{-2}$ and $1.41\cdot 10^{-2}$, respectively. It is worth noticing that the mean errors for the surrogate that is constructed adaptively and separately for each measurement component are smaller than the pointwise error of the total order surrogate at the origin $\vartheta=0$ (cf.~Table~\ref{tab:errors}).

\begin{figure}[t]
  \centering
  \includegraphics{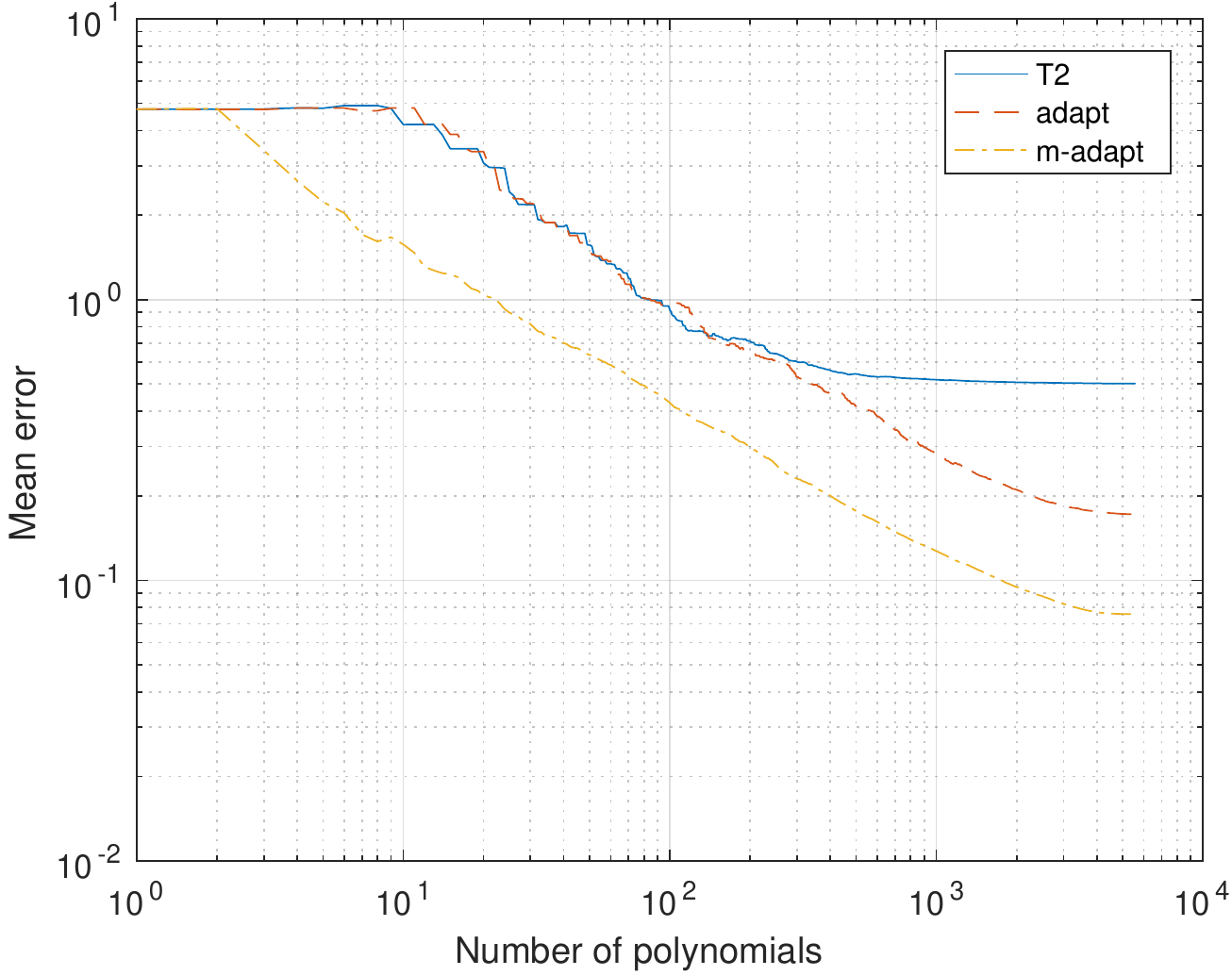}
  \caption{Mean errors for three different surrogates as functions of the expansion size.}
  \label{fig:conv}
\end{figure}

Figure~\ref{fig:conv} shows the mean errors for the log-normal random realizations if only a subset of the computed polynomials is used in each surrogate.
More precisely, for each surrogate, the polynomials are first sorted based on the norms of the coefficient vectors $(\alpha_{m,\boldsymbol{i}})_{m=1}^M$ in the expansion~\eqref{eq:surrogate}.
Then the polynomials having the largest magnitudes for their coefficients are retained while the others are discarded.
For the separately constructed surrogate this filtering is performed component-wise, so that the norms above are just the absolute values.
The values at the right end of the convergence plots in Figure~\ref{fig:conv} are those on the bottom line in Table~\ref{tab:errors}.
It is clear that even a prematurely stopped adaptive algorithm, say, at $500$ polynomials, overwhelms the total order surrogate, although the values in Figure~\ref{fig:conv} do not exactly correspond to the mean errors one would obtain by stopping the algorithm after obtaining a certain number of polynomials. For the adaptively and separately formed surrogate this effect is even more apparent: already $100$ polynomials per measurement component is enough for achieving higher accuracy than the total order surrogate with $\mathcal{P}(\mathcal{K}) = \mathcal{T}_{\text{tot}}(2)$.

The error between our standard numerical forward solver for the surrogate construction and a highly accurate solver is $1.62\cdot 10^{-2}$ for the case $\vartheta=0$, measured in the same way as the error between the standard numerical solver and the corresponding surrogates.
The highly accurate solver has tens of thousands of FEM nodes and a Crank--Nicolson time step of $1/50$.
The discrepancy between the two forward solvers stems mainly from the spatial discretization, and
it is expected to be higher for more complicated domains and nonconstant fields $a$ and $b$.
Roughly speaking, we anticipate that the discretization error of our standard numerical forward solver is of the same order as the surrogate error for the most accurate of the three surrogates.

\subsection{Inverse problem}
\label{ssec:invprob}

For the inverse examples that follow, we generate the (noiseless) boundary measurement data $\bar{U} \in \R^M$ by solving the problem \eqref{eq:weak} with the highly accurate solver described at the end of the previous section.
In general, the coefficient functions and boundary shapes used for simulating the data are not (exactly) representable by the parametrization upon which the surrogates are built. However, the circumference for all target objects is chosen to be $2 \pi = |\partial \varOmega(0)|$; it is not too far-fetched to assume that the approximate size of the imaged object is known in practice. A realization of a vector of zero mean Gaussian noise with independent components is added to the measurement, with the standard deviation of the $m$th component being $5 \cdot 10^{-3} \cdot \bar{U}_m$. The resulting noisy measurement vector is denoted by $\widetilde{U} \in \R^M$.

The inverse problem is treated as a nonlinear least squares minimization
  \begin{equation}
  \label{eq:lsq}
  \argmin_{\vartheta \in \varXi^N}  \left\{ \big\lVert \widetilde{U} - U^{(\mathcal{K})}(\vartheta) \big\rVert_2^2 + \delta^2 \lVert G \vartheta \rVert_2^2 \right\},
  \end{equation}
where $G \in \mathbf{R}^{N \times N}$ is a block-diagonal regularization matrix.
Its first two blocks of lengths $N_a = N_b$ are $\chol(K^{-1})$, where $\chol(\, \cdot \,)$ denotes the upper-triangular Cholesky factor of a matrix and $K$ is formed by evaluating the covariance function \eqref{eq:normcovar} with $\varsigma^2 = 0.5$ and $\lambda=1/3$ at all centerpoint pairs for the pixels in $\varOmega(0)$.
Therefore, the regularization operator prefers smooth fields $a$ and $b$ as in,~e.g.,~\cite{darde13b,toivanen14}.
If the sum of the noise in $\widetilde{U}$ and the surrogate and numerical errors in $U^{(\mathcal{K})}$ was assumed to be Gaussian with independent zero-mean components having standard deviation $\delta>0$, then \eqref{eq:lsq} would have a Bayesian interpretation of maximizing the posterior distribution for the parameter $\vartheta$ under the prior assumption that the fields $a$ and $b$ are mutually independent Gaussian random fields with pointwise variance $\varsigma^2$ and correlation length $\lambda$.
To be quite precise, the regularization matrix should be updated based on the current shape of the domain, but since the boundary perturbations are relatively small, it is considered to be enough to compute the corresponding covariances only in the reference disk.
We employ no regularization (i.e.,~no informative prior) for the coefficient $c$ or the shape of the object, meaning that the lower right-hand corner of $G$ is actually empty.
Such regularization would become necessary if the respective numbers of degrees of freedom, $N_c$ and $N_\varOmega$, were increased significantly.

We use the generic value $\delta = 10^{-2}$ for the regularization parameter in all the following examples.
Note that $\delta$ and $\varsigma$ couple in \eqref{eq:lsq} in such a way that decreasing $\delta$ is equivalent to increasing $\varsigma$.
There is, however, a semi-heuristic Bayesian justification for our choices of $\delta$ and $\varsigma$.
Since the values of the functions $a$ and $b$ vary between $0.1$ and $1$, the choice $\varsigma^2 = 0.5$ seems reasonable.
On the other hand, assuming (falsely) that the sums of the artificial noise and the surrogate and discretization errors at the sensors are independent mean-free Gaussians with the same distribution, one can choose $\delta \approx e / \sqrt{M}$, where $e \approx 20 \cdot 10^{-2}$ is a crude approximation for the magnitude of the total error in the Euclidean norm when the surrogate is based on the standard adaptive algorithm (cf.~the second column of Table~\ref{tab:errors}).
Increasing $\delta$ would generally force the minimization algorithm to converge more robustly, but it would also result in reconstructions that are closer to the average values and not so informative.
Moreover, increasing the amount of measurement noise would make the reconstructions less accurate.
The choice of $\lambda=1/3$ reflects our prior assumption on the characteristic length of variations (in $a$ and $b$) inside $\varOmega$.
If $\lambda$ is increased, the reconstructions become more constant, whereas decreasing $\lambda$ allows reproducing finer details but also leads to unwanted fluctuations in reconstructions of areas described by constant parameters.

\begin{figure}[t]
  \centering
  \includegraphics{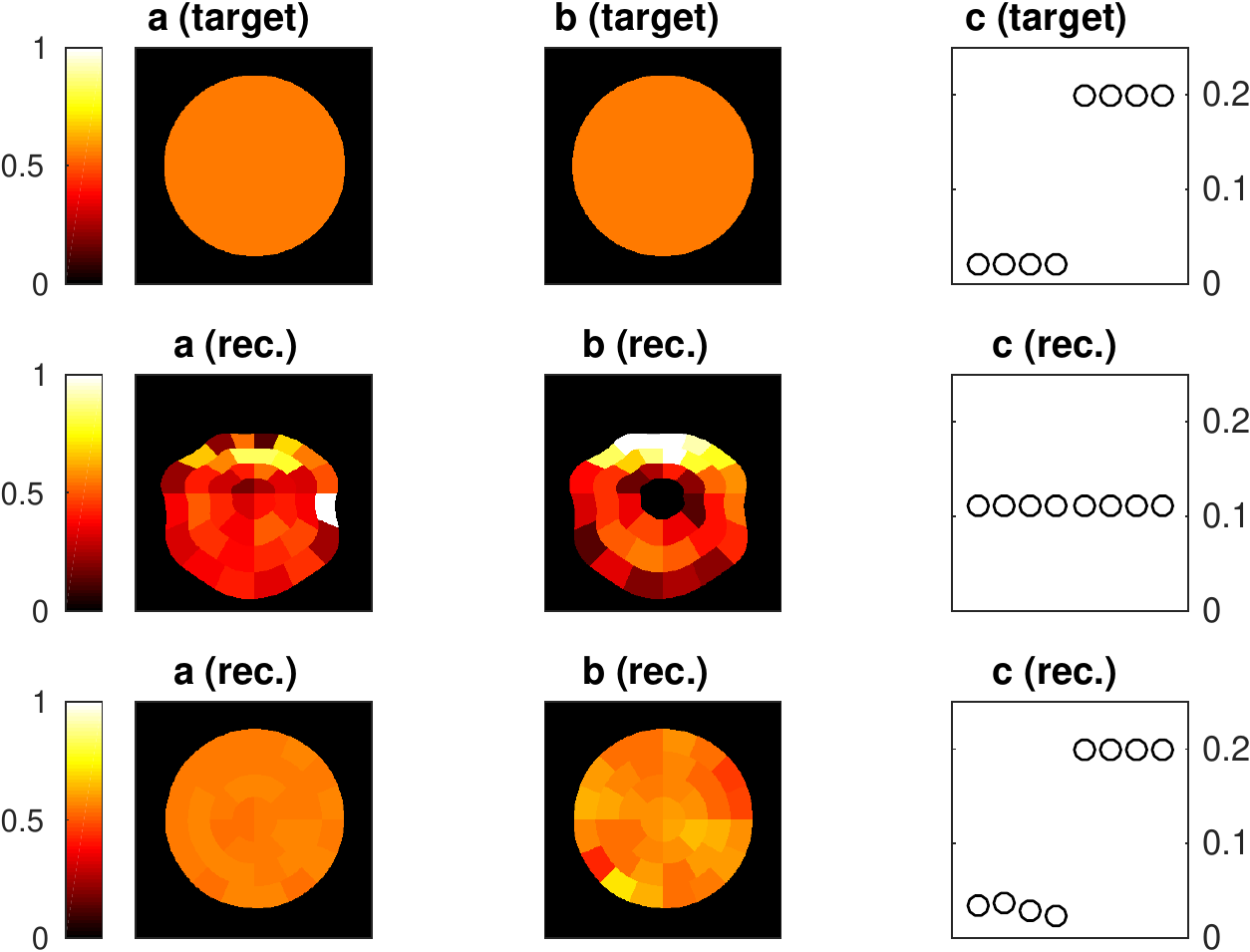}
  \caption{Target configuration (\emph{top}), reconstruction without reconstructing the coefficient $c$ (\emph{middle}) and full reconstruction (\emph{bottom}). $N_a = N_b = 40$.}
  \label{fig:reco104c}
\end{figure}

We solve the problem \eqref{eq:lsq} by using the lsqnonlin function in Matlab without constraints starting from the initial guess $\vartheta=0$. However, any nonlinear least squares approach should work, since computing the values and Jacobians of the to-be-minimized function only requires polynomial evaluations and applying the regularization makes the problem somewhat well-behaving.
Constraints that force the parameter vector to stay within $\varXi^N$ could be implemented if necessary, but in our examples the algorithm seems to work without such.
The reconstructed functions $a$, $b$, $c$ and the shape of $\partial \varOmega$ can be deduced from the minimizing parameter vector $\vartheta$ by considering the mappings introduced in Section~\ref{ssec:param}.
Observe that the parameter values in Table~\ref{tab:parvalues} are quite arbitrary and no big effort has been made to select a measurement configuration that yields maximal amount of information about the target (notice that there is a lot of freedom in the choice of,~e.g.,~the heating pattern $g$).
This task of \emph{optimal experimental design} is not addressed in this paper; see,~e.g.,~\cite{huan13} for more on that topic.

Let us first consider the (computationally) easier case $N_a=N_b=40$ and investigate  what happens if the uncertainties in $c$ and $\partial \varOmega$ are not taken into account.
To this end, two additional `bad' surrogates are constructed.
The first one assumes that $c=\bar{c}=0.11$ between the heaters and so the number of parameters is effectively reduced by $N_c$.
This surrogate is formed adaptively and a common polynomial basis of size $\lvert \mathcal{P}(\mathcal{K}) \rvert \approx 5565$ is used for every measurement.
Figure~\ref{fig:reco104c} shows the target configuration which corresponds to the case $\vartheta=0$ except that the coefficient $c$ takes two distinct values, namely $0.02$ on the first four boundary segments between the heaters and $0.2$ on the remaining ones. The second and third rows of the figure demonstrate that optimizing \eqref{eq:lsq} only for $\vartheta^{(a)}$, $\vartheta^{(b)}$ and $\vartheta^{(\varOmega)}$ employing the new surrogate yields an inaccurate reconstruction, whereas using the full adaptively computed common-basis surrogate from Section~\ref{ssec:surrtest} with $\lvert \mathcal{P}(\mathcal{K}) \rvert \approx 5565$ and optimizing the whole vector $\vartheta$ results in a far superior outcome.

The next target setup is illustrated in the top row of Figure~\ref{fig:reco104bnd}; it corresponds to vanishing subvectors $\vartheta^{(a)}$, $\vartheta^{(b)}$ and $\vartheta^{(c)}$.
The second bad surrogate is formed by assuming that the domain is the unit disk, effectively removing $\vartheta^{(\varOmega)}$ from our parameter vector.
This surrogate is again computed by adaptive, common-basis method and involves $\lvert \mathcal{P}(\mathcal{K}) \rvert \approx 5565$ polynomials. It is then employed in \eqref{eq:lsq} to compute the corresponding reconstruction, which is shown on the second row of Figure~\ref{fig:reco104bnd}. It is obvious that ignoring the uncertainties in the shape of $\varOmega$ heavily deteriorates the reconstructions of the other parameters.
The third row presents the result of using the full adaptive common-basis surrogate in \eqref{eq:lsq}: 
the reconstruction is clearly better than when ignoring $\vartheta^{(\varOmega)}$, albeit not as accurate as that on the third row of Figure~\ref{fig:reco104c} where the same surrogate was used for a different target.

\begin{figure}[t]
  \centering
  \includegraphics{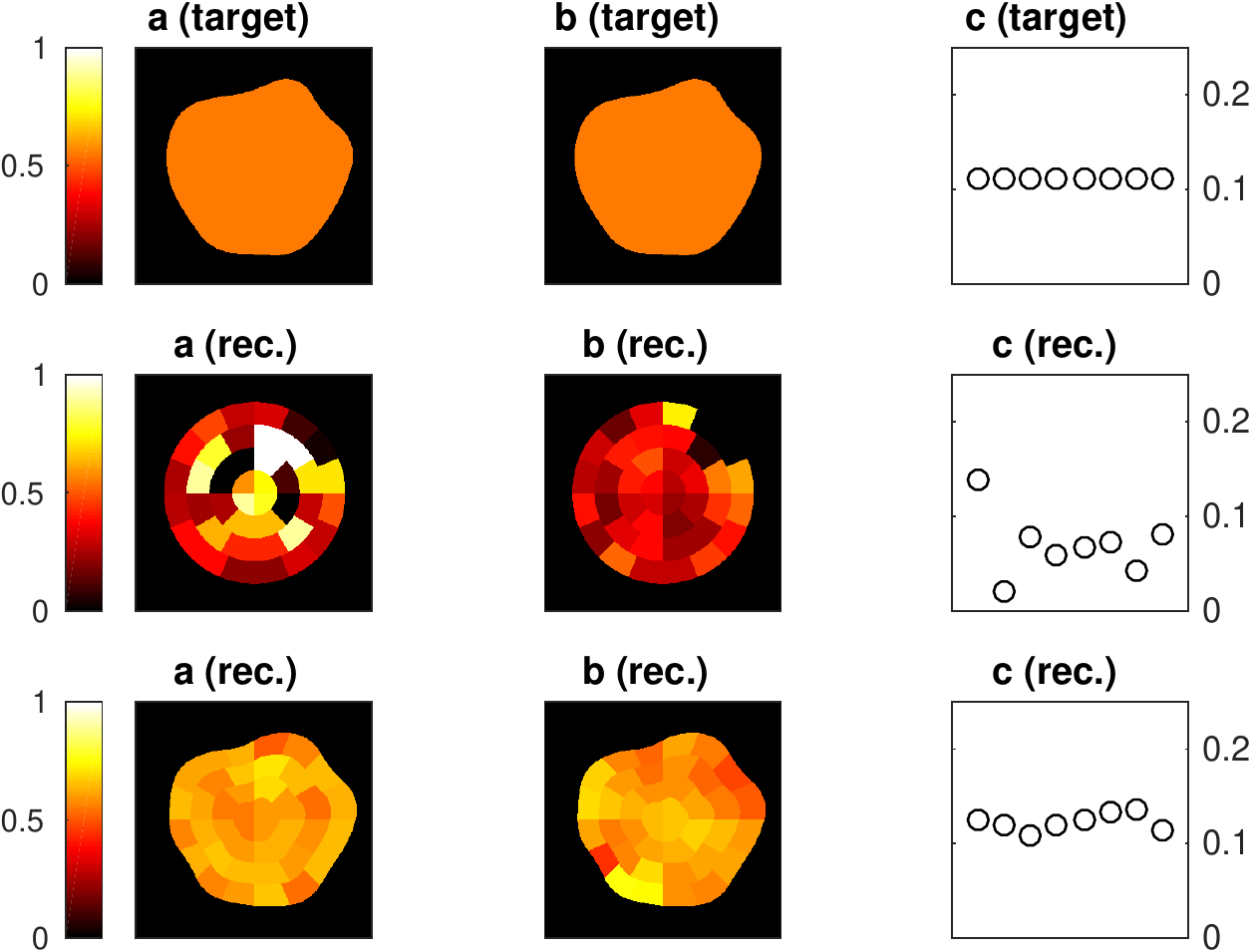}
  \caption{Target configuration (\emph{top}), reconstruction without reconstructing the shape of $\varOmega$ (\emph{middle}) and full reconstruction (\emph{bottom}). $N_a = N_b = 40$. }
  \label{fig:reco104bnd}
\end{figure}

Finally, let us consider a surrogate that corresponds to $N_a = N_b = 480$ as in Figure~\ref{fig:setup}, resulting in $N=984$ variables.
One polynomial basis is adaptively constructed for two adjacent active heaters at a time, so that there are altogether four different bases.
Due to certain symmetries in the parametrization, choosing to use such a four-bases surrogate does not actually increase the amount of offline computations.
The maximal univariate Legendre degree is set to $4$ and the adaptive algorithm is terminated after obtaining $485\,605$ polynomials, which is the same number as in the total order set $\mathcal{T}_{\rm tot}(2)$ for $984$ parameters. The maximal total polynomial degree found by the algorithm is $11$.
About $81\%$ of the polynomials are of the second order.
The higher order polynomials are mostly associated with the shape variables $\vartheta^{(\varOmega)}$ and the parameters in $\vartheta^{(a)}$ defining the thermal conductivity $a$ close to the exterior boundary.
However, the parameter choices in Table~\ref{tab:parvalues} obviously have a significant impact on the way the adaptive algorithm tends to distribute the higher order polynomials.

Figure~\ref{fig:reco984-1} presents the result of the surrogate inversion for a smooth target configuration.
Both the thermal conductivity and the heat capacity are relatively well reconstructed along with the object shape. The same applies to a somewhat lesser extent to the heat transfer coefficient.
Figure~\ref{fig:reco984-2} documents a more demanding reconstruction. In this case, the piecewise constant target functions for $a$ and $b$ are not compatible with the chosen smoothness regularization.
However, the reconstructions of $a$ and $b$ arguably still carry useful qualitative information about the interior structure of the imaged object --- in particular, far more information than one would obtain if the uncertainties in the exterior boundary and the heat transfer coefficient were simply ignored in the inversion.

The online phase of computing the reconstructions in Figures~\ref{fig:reco984-1} and \ref{fig:reco984-2},~i.e.,~solving the minimization problem~\eqref{eq:lsq} with $N=984$ by the lsqnonlin function of Matlab with a user-supplied Jacobian, took only roughly ten seconds on a standard desktop computer.
Obviously, the efficiency depends on the implementation and on the precise settings of the minimization procedure.
Note also that the coefficient matrix $\alpha \in \mathbf{R}^{M \times \lvert \mathcal{P}(\mathcal{K}) \rvert}$ contains almost $2 \cdot 10^8$ elements so storing it requires quite a bit of memory.

\begin{figure}[t]
  \centering
  \includegraphics{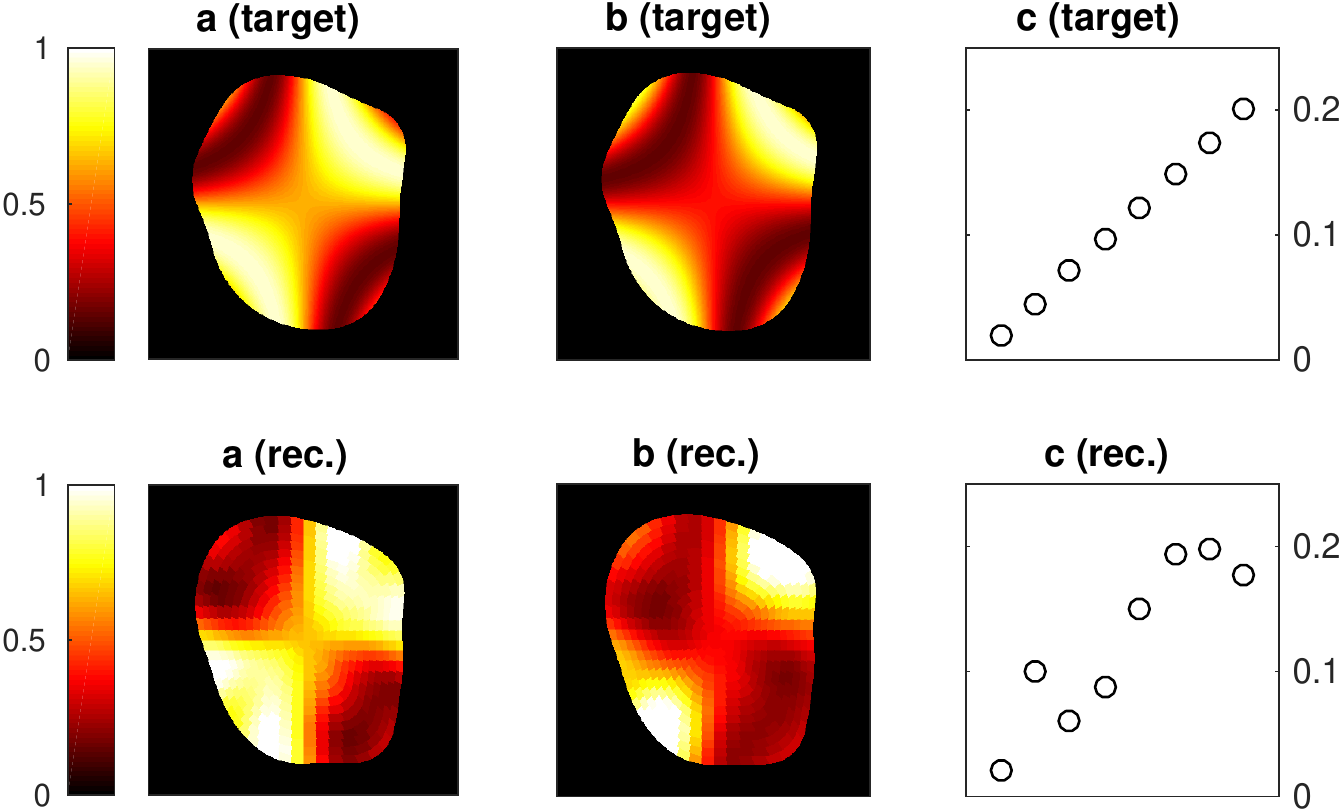}
  \caption{Smooth target configuration (\emph{top}) and its reconstruction using a surrogate with $N = 984$ variables (\emph{bottom}).}
  \label{fig:reco984-1}
\end{figure}

\begin{figure}[t]
  \centering
  \includegraphics{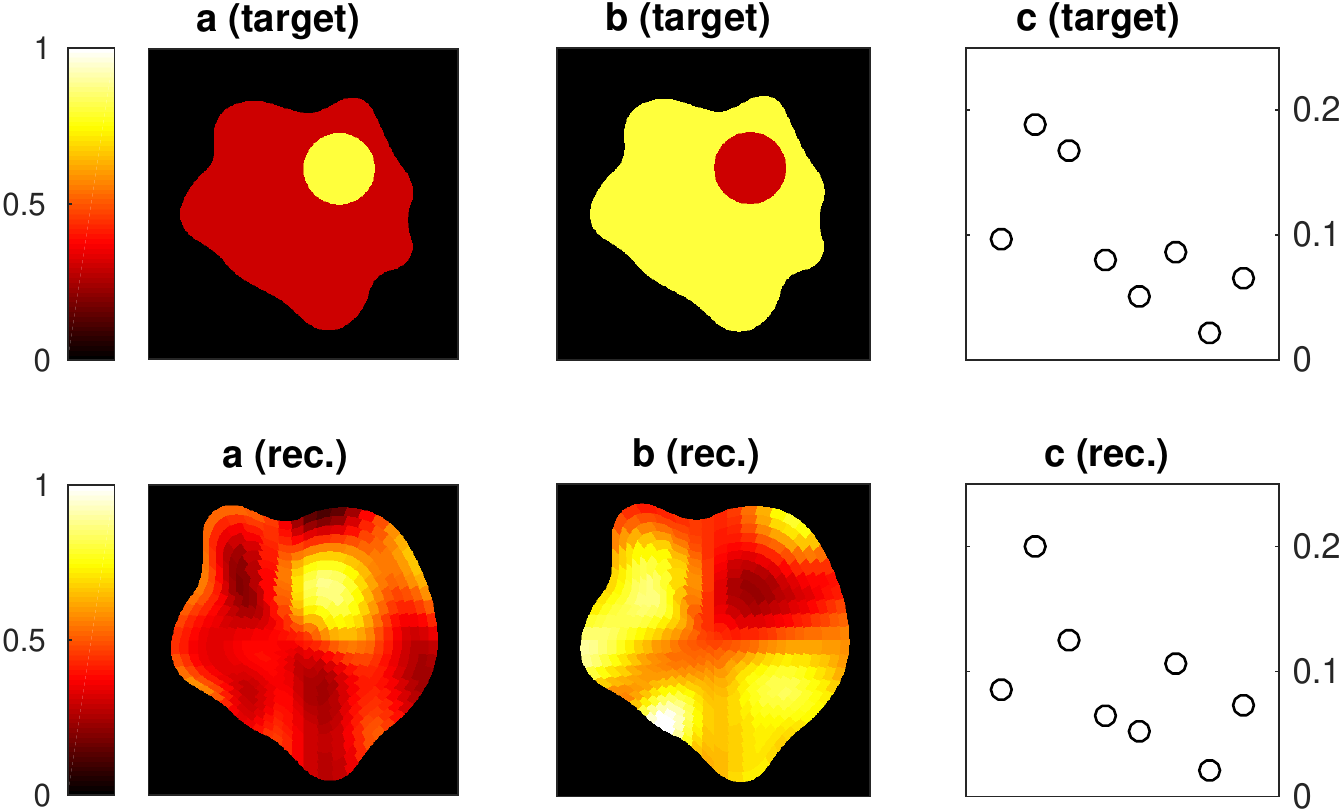}
  \caption{Piecewise constant target functions (\emph{top}) and their reconstructions using a surrogate with $N = 984$ variables (\emph{bottom}).}
  \label{fig:reco984-2}
\end{figure}

\section{Conclusions}
\label{sec:conclusions}

We have extended the previous works on thermal tomography to include estimating the exterior shape of the imaged object in addition to reconstructing the thermal conductivity, heat capacity and surface conductance.
The presented algorithm is based on the adaptive pseudospectral approximation approach and simple output least squares minimization, resulting in a fast inversion method that requires only polynomial evaluation and differentiation while being independent of the discretization of the parabolic forward problem.
Our numerical examples demonstrated that the shape estimation is indeed important if one wants to obtain reasonable reconstructions of the thermal parameters inside a physical domain whose shape is not accurately known.

\section*{Acknowledgments}
This work was supported by the Academy of Finland (decision 267789) and the Finnish Foundation for Technology Promotion TES.

\bibliographystyle{ppde}
\bibliography{ref}

\begin{thebibliography}{10}
\expandafter\ifx\csname urlstyle\endcsname\relax
  \providecommand{\doi}[1]{doi:\discretionary{}{}{}#1}\else
  \providecommand{\doi}{doi:\discretionary{}{}{}\begingroup
  \urlstyle{rm}\Url}\fi

\bibitem{bakirov05}
\textsc{V.~F. Bakirov and R.~A. Kline}.
\newblock Diffusion-based thermal tomography.
\newblock \emph{Journal of Heat Transfer}, 127:1276--1279, 2005.

\bibitem{CHQZ2}
\textsc{C.~Canuto, M.~Y. Hussaini, A.~Quarteroni, and T.~A. Zang, Jr.}
\newblock \emph{Spectral Methods:\ Fundamentals in Single Domain}.
\newblock Springer, 2006.

\bibitem{conrad13}
\textsc{P.~R. Conrad and Y.~M. Marzouk}.
\newblock Adaptive {S}molyak pseudospectral approximations.
\newblock \emph{SIAM Journal on Scientific Computing}, 35:A2643--A2670, 2013.

\bibitem{constantine12}
\textsc{P.~G. Constantine, M.~S. Eldred, and E.~T. Phipps}.
\newblock Sparse pseudospectral approximation method.
\newblock \emph{Computer Methods in Applied Mechanics and Engineering},
  229--232:1--12, 2012.

\bibitem{darde13a}
\textsc{J.~Dard\'{e}, N.~Hyv\"{o}nen, A.~Sepp\"{a}nen, and S.~Staboulis}.
\newblock Simultaneous reconstruction of outer boundary shape and admittivity
  distribution in electrical impedance tomography.
\newblock \emph{SIAM Journal on Imaging Sciences}, 6:176--198, 2013.

\bibitem{darde13b}
\textsc{J.~Dard\'{e}, N.~Hyv\"{o}nen, A.~Sepp\"{a}nen, and S.~Staboulis}.
\newblock Simultaneous recovery of admittivity and body shape in electrical
  impedance tomography: an experimental evaluation.
\newblock \emph{Inverse Problems}, 29:085004, 2013.

\bibitem{dautray92}
\textsc{R.~Dautray and J.-L. Lions}.
\newblock \emph{Mathematical Analysis and Numerical Methods for Science and
  Technology, Vol.\ 5:\ Evolution Problems I}.
\newblock Springer-Verlag, 1992.

\bibitem{gautschi04}
\textsc{W.~Gautschi}.
\newblock \emph{Orthogonal Polynomials:\ Computation and Approximation}.
\newblock Oxford University Press, 2004.

\bibitem{gerstner03}
\textsc{T.~Gerstner and M.~Griebel}.
\newblock Dimension--adaptive tensor--product quadrature.
\newblock \emph{Computing}, 71:65--87, 2003.

\bibitem{huan13}
\textsc{X.~Huan and Y.~M. Marzouk}.
\newblock Simulation-based optimal {B}ayesian experimental design for nonlinear
  systems.
\newblock \emph{Journal of Computational Physics}, 232:288--317, 2013.

\bibitem{hyvonen16}
\textsc{N.~Hyv\"{o}nen, V.~Kaarnioja, L.~Mustonen, and S.~Staboulis}.
\newblock Polynomial collocation for handling an inaccurately known measurement
  configuration in electrical impedance tomography.
\newblock \emph{SIAM Journal on Applied Mathematics}, 77:202--223, 2017.

\bibitem{kaarnioja13}
\textsc{V.~Kaarnioja}.
\newblock \emph{Smolyak quadrature}.
\newblock University of Helsinki, 2013.
\newblock Master's thesis.

\bibitem{Kaipio04a}
\textsc{J.~P. Kaipio and E.~Somersalo}.
\newblock \emph{Statistical and Computational Inverse Problems}.
\newblock Springer--Verlag, 2004.

\bibitem{kolehmainen08}
\textsc{V.~Kolehmainen, J.~P. Kaipio, and H.~R.~B. Orlande}.
\newblock Reconstruction of thermal conductivity and heat capacity using a
  tomographic approach.
\newblock \emph{International Journal of Heat and Mass Transfer},
  51:1866--1876, 2008.

\bibitem{Larson13}
\textsc{M.~G. Larson and F.~Bengzon}.
\newblock \emph{The Finite Element Method:\ Theory, Implementation, and
  Applications}.
\newblock Springer, 2013.

\bibitem{mustonen15}
\textsc{L.~Mustonen}.
\newblock Numerical study of a parametric parabolic equation and a related
  inverse boundary value problem.
\newblock \emph{Inverse Problems}, 32:105008, 2016.

\bibitem{nissinen11}
\textsc{A.~Nissinen, V.~Kolehmainen, and J.~P. Kaipio}.
\newblock Reconstruction of domain boundary and conductivity in electrical
  impedance tomography using the approximation error approach.
\newblock \emph{International Journal of Uncertainty Quantification},
  1:203--222, 2011.

\bibitem{schwab11}
\textsc{{\relax Ch}.~Schwab and C.~J. Gittelson}.
\newblock Sparse tensor discretizations of high-dimensional parametric and
  stochastic {PDE}s.
\newblock \emph{Acta Numerica}, 20:291--467, 2011.

\bibitem{toivanen12}
\textsc{J.~M. Toivanen, V.~Kolehmainen, T.~Tarvainen, H.~R.~B. Orlande, and
  J.~P. Kaipio}.
\newblock Simultaneous estimation of spatially distributed thermal
  conductivity, heat capacity and surface heat transfer coefficient in thermal
  tomography.
\newblock \emph{International Journal of Heat and Mass Transfer},
  55:7958--7968, 2012.

\bibitem{toivanen14}
\textsc{J.~M. Toivanen, T.~Tarvainen, J.~M.~J. Huttunen, T.~Savolainen,
  H.~R.~B. Orlande, J.~P. Kaipio, and V.~Kolehmainen}.
\newblock {3D} thermal tomography with experimental measurement data.
\newblock \emph{International Journal of Heat and Mass Transfer},
  78:1126--1134, 2014.

\bibitem{winokur13}
\textsc{J.~Winokur, P.~Conrad, I.~Sraj, O.~Knio, A.~Srinivasan, W.~C. Thacker,
  Y.~Marzouk, and M.~Iskandarani}.
\newblock A priori testing of sparse adaptive polynomial chaos expansions using
  an ocean general circulation model database.
\newblock \emph{Computational Geosciences}, 17:899--911, 2013.

\bibitem{winokur15}
\textsc{J.~Winokur, D.~Kim, F.~Bisetti, O.~P. Le~Ma{\^{\i}}tre, and O.~M.
  Knio}.
\newblock Sparse pseudo spectral projection methods with directional adaptation
  for uncertainty quantification.
\newblock \emph{Journal of Scientific Computing}, 68:596--623, 2016.

\end{thebibliography}

\end{document}